%
%
%
%
\documentclass{amsart}
\usepackage{amsmath}
\usepackage{ amssymb}

\usepackage{fullpage}

\usepackage{graphicx}

\usepackage{xcolor}

\usepackage{tikz}
\usetikzlibrary{patterns}
\usetikzlibrary{arrows}
\definecolor{red}{rgb}{1,0,0}
\definecolor{blue}{rgb}{0,0,1}

\usepackage{amsthm}

\newtheorem{theorem}{Theorem}
\newtheorem{corollary}{Corollary}
\newtheorem{remark}{Remark}

\numberwithin{equation}{section}



\newcommand{\dx}{\partial_x}
\newcommand{\dz}{\partial_z}
\newcommand{\dy}{\partial_y}
\newcommand{\tx}{\tilde{x}}
\newcommand{\tz}{\tilde{z}}
\newcommand{\ttt}{\tilde{t}}
\newcommand{\teta}{\tilde{\eta}}

\newcommand{\tPhi}{\tilde{\Phi}}
\newcommand{\tP}{\tilde{P}}

\newcommand{\dsp}{{\displaystyle}}

\newcommand{\R}{{\mathbb{R}}}
\newcommand{\scrO}{{\mathcal{O}}}

\DeclareMathOperator*{\esssup}{ess\,sup}

\begin{document}

\title[Mechanical balance laws for two-dimensional Boussinesq systems]{Mechanical balance laws for two-dimensional Boussinesq systems}



\author{Chourouk El Hassanieh}
\address{\textbf{C. E.~Hassanieh:} Department of Mathematics, Faculty of Sciences 1,
Lebanese University, Beirut, Lebanon \newline
 $\&$ Sorbonne Universit\'{e}, 4 place Jussieu, 75005 Paris, France $\&$ Inria Paris}
\email{chourouk.el-hassanieh@inria.fr}

\author{Samer Israwi}
\address{\textbf{S.~Israwi:} Department of Mathematics, Faculty of Sciences 1,
Lebanese University, Beirut, Lebanon \newline $\&$ Arts, Sciences, and Technology University in Lebanon, CRAMS Center of Research in Applied Mathematics and Statistics, faculty of sciences, Beirut, Lebanon
}
\email{s\_israwi83@hotmail.com}

\author{Henrik Kalisch}
\address{\textbf{H.~Kalisch:} Department of Mathematics, University of Bergen, Postbox 7800, 5020 Bergen, Norway}
\email{henrik.kalisch@uib.no}

\author{Dimitrios Mitsotakis}
\address{\textbf{D.~Mitsotakis:} Victoria University of Wellington, School of Mathematics and Statistics, PO Box 600, Wellington 6140, New Zealand}
\email{dimitrios.mitsotakis@vuw.ac.nz}

\author{Amutha Senthilkumar}
\address{\textbf{A.~Senthilkumar:} Department of Mathematics, University of Bergen, Postbox 7800, 5020 Bergen, Norway}
\email{amutha.senthilkumar@math.uib.no}



\subjclass[2000]{35Q35, 76B15, 35B40}

\date{\today}


\keywords{Boussinesq systems, balance laws, asymptotic relations}

\begin{abstract}
Most of the asymptotically derived Boussinesq systems of water wave theory for long waves of small amplitude fail to satisfy exact mechanical conservation laws for mass, momentum and energy. It is thus only fair to consider approximate conservation laws that hold in the context of these systems. Although such approximate mass, momentum and energy conservation laws can be derived, the question of a rigorous mathematical justification still remains unanswered. The aim of this paper is to justify the formally derived mechanical balance laws for weakly nonlinear and weakly dispersive water wave Boussinesq systems. In particular, two asymptotic expansions used for the formal and rigorous derivation of the Boussinesq systems and the same are employed for the derivation and rigorous justification of the balance laws. Numerical validation of the asymptotic orders of approximation is also presented.
\end{abstract}

\maketitle

\section{Introduction}\label{sec:introduction}

The propagation of surface water waves is described by the Euler equations of fluid mechanics, which are accompanied by dynamic boundary conditions at the free surface and the sea floor, as detailed in \cite{Whitham1974}. The solutions of the Euler equations also satisfy essential conservation equations, such as those for mass, momentum, and energy. While the Euler equations are a well-justified model both physically and mathematically, as indicated in \cite{Lannes13}, solving them, whether theoretically or numerically, remains a formidable challenge. This difficulty arises from the fact that the domain in which dependent variables like velocity and pressure are defined changes over time and is bounded by the unknown free surface.

As a result, numerous approximate models have been developed to approximate solutions of the Euler equations. These approximations rely on simplification assumptions about the characteristics of the waves to be described. These assumptions typically give rise to what are known as {\em wave regimes}. Two significant examples of water wave regimes are the small amplitude and long wave regime and the large amplitude, long wave regime. In both of these regimes, the primary focus is on long water waves, which means that the waves are considered to have a much greater wavelength compared to the depth of the water. In the small amplitude regime, the waves are assumed to have small amplitudes in comparison to the water depth, whereas in the large amplitude regime, there is no restriction on the amplitude of the waves.

In both regimes, formal approximations of the Euler equations are typically referred to as Boussinesq systems, with the exception of the large amplitude and long wave regime, where these systems are often termed Serre-Green-Naghdi equations, as mentioned in \cite{Lannes13}. This nomenclature stems from the fact that the initial approximations were first formulated by J. Boussinesq \cite{Bous} under the small amplitude assumption. Subsequently, the small amplitude assumption was removed by F. Serre \cite{Serre1953} and independently by Green and Naghdi in \cite{GN1976}, who derived equations describing strongly nonlinear and weakly dispersive water waves. For more comprehensive details, please refer to \cite{Lannes13}.

The initial approximations of the Euler equations had several theoretical and sometimes practical limitations. Therefore, research efforts were focused on deriving mathematically sound Boussinesq systems that possess favorable nonlinear and dispersive properties. Building upon Boussinesq's work, a whole class of Boussinesq-type systems were derived \cite{BCS2002, BCL2015} and were subsequently justified both theoretically and numerically, as documented in \cite{Lannes13, DM2008, DMS2007, dms2009, DMS2010, IKKM, KLIG}. Since these models find applications in the nearshore zone, it is imperative to be able to assess their accuracy concerning mass, momentum, and energy balance laws.

In the one-dimensional case of the Serre equations, balance laws have been estimated using asymptotic techniques, and it has been established that these balance laws are satisfied by the solutions of the Serre equations with the same level of accuracy as they approximate the Euler equations, as discussed in \cite{KKM}. Similarly, for one-dimensional Boussinesq systems, similar estimates were obtained, as documented in \cite{AK2}.

In this paper, we focus on a two-dimensional shallow water wave regime modeled by a set of Boussinesq-type equations initially presented in \cite{BCL2015}. These two-dimensional Boussinesq Equations can be derived from the Water Waves problem through a straightforward asymptotic expansion based on the potential $\Phi$. They describe the propagation of long waves with small amplitudes on the surface of an ideal fluid. The motion of the free surface and the evolution of the velocity field of an incompressible, inviscid, and irrotational fluid under the influence of gravity is the actual notion of water waves on the surface of an ideal fluid.

After revisiting the derivation of the systems through asymptotic techniques, we conduct an asymptotic analysis of the mass, momentum, and energy balance laws. Theoretical justifications are provided for these balance laws, along with the approximations of the velocity field within the systems under consideration. These justifications are based on error estimates between solutions of the Euler equations and the Boussinesq systems, assuming identical initial data.

This paper is structured as follows: In Section \ref{sec:derivation}, we provide an asymptotic derivation of the two-dimensional Boussinesq systems found in \cite{BCL2015, BCS2002}. In Section \ref{sec:rigorous}, we offer a mathematically rigorous derivation of the balance laws, expanding significantly on the preliminary work presented in \cite{IK2020, IK_PLA19}. Additionally, we conduct an analysis of the errors in the balance laws. We introduce essential theorems to enhance the understanding of the relationship between the formal derivation of the balance laws and their theoretical justification. These mechanical balance laws are dependent on the conventional small parameters $\alpha$ and $\beta$, which quantify the nonlinearity and dispersion of the system, respectively. We validate our theoretical findings numerically in Section \ref{sec:numerics}. The numerically computed errors in mass, momentum, and energy conservation align with the theoretically derived estimates for the propagation of two-dimensional water waves. We employ linear analysis to elucidate the behavior of the solutions and their adherence to balance laws. Furthermore, our estimation of the solutions of the linearized Boussinesq equations reveals a distinct behavior of the two-dimensional solutions compared to what is known for one-dimensional problems, as documented in \cite{Whitham1974}.

\section{Notation}

We start by introducing some notation. In what follows we denote by $X\in\R^2$ the horizontal variables $X=(x,y)$. We denote by $\alpha$ and $\beta$ the non-linearity and shallowness and parameters respectively given by \eqref{alphabeta}. We use the following notations for the gradient and laplacian:
\begin{align*}
\nabla=(\dx,\dy)^T,\qquad \Delta=\dx^2+\dy^2,\qquad \nabla_{X,z}=(\dx,\dy,\dz)^T,\qquad \nabla_{X,z}^{\beta}=(\sqrt{\beta}\,\nabla,\dz)^T.
\end{align*}
We denote by $e_z$ the upward normal unit vector in the vertical direction, while $\partial_n u$  is the upward co-normal derivative of $u$. The non-dimensional fluid domain will be defined as
\begin{equation*}
\Omega_t=\left\{(X,z)\in \R^3, 0\leq z \leq 1+\alpha\eta(t,X)\right\}\ .
\end{equation*}
For all $a,b\in\R$ we write $a\vee b=\max\{a,b\}$. We denote by $C(\lambda_1, \lambda_2,\cdots)$ a constant depending on the parameters $\lambda_1$, $\lambda_2,\cdots$, and \emph{whose dependence on the $\lambda_j$ is always assumed to be non-decreasing}.

Let $p$ be a constant with $1\leq p< \infty$ and $L^p(\R^2)$ the space of all Lebesgue-measurable functions $f$ with the standard norm $$\vert f \vert_{L^p(\R^2)}=\big(\int_{\R^2}\vert f(X)\vert^p dX\big)^{1/p}<\infty.$$ 
The space $L^\infty(\R^2)$ consists of all essentially bounded, Lebesgue-measurable functions $f$ with the norm
$$
\vert f\vert_{L^\infty(\R^2)}= \esssup \vert f(X)\vert<\infty\ .
$$
Let $f:(-1,0)\mapsto H^s(\R^2)$ be a Bochner measurable function. The space  $L^\infty H^s=L^{\infty}((-1,0);H^{s}(\R^2))$ is endowed with
the canonical norm: 
$$
\vert f\vert_{L^\infty H^s}=\esssup_{z\in(-1,0)} \vert f(X,z)\vert_{H^s(\R^2)}\ .
$$
For any real constant $s$, $H^s=H^s(\R^2)$ denotes the Sobolev space of all tempered distributions $f$ with the norm $\vert f\vert_{H^s}=\vert \Lambda^s f\vert_{L^2} < \infty$, where $\Lambda$ is the pseudo-differential operator $\Lambda=(1-\Delta)^{1/2}$.
We denote by $(H^{s,k},\vert\cdot\vert_{H^{s,k}})$ the Banach space defined by
\begin{align*}
H^{s,k}=\displaystyle\bigcap_{j=0}^{k}H^j\left((-1,0);H^{s-j}(\R^2)\right),  \qquad \vert\mathit{u}\vert_{H^{s,k}}=\sum_{j=0}^k \vert\Lambda^{s-j}\dz^j \mathit{u}\vert_{L^2}
\end{align*}

We denote by $\dot{H}^{s+1}(\R^2)$ the topological vector space
\begin{align*}
\dot{H}^{s+1}(\R^2)=\left\{f\in L^2_{loc}(\R^2),\quad \nabla f \in H^s(\R^2)\times H^s(\R^2)\right\},
\end{align*}
equipped with the semi-norm $\vert f \vert_{\dot{H}^{s+1}(\R^2)}=\vert\nabla f\vert_{H^s(\R^2)}$.





\section{Formal derivation of the Boussinesq systems}\label{sec:derivation}

We start with the derivation of the $a-b-c-d$ family of Boussinesq systems of \cite{BCL2015}.
We consider the domain $\left \{ (x,y,z) \in \mathbb{R}^3, -h_0<z<\eta (x,y,t)\right \}$ where the parameter
$h_0$  represents the undisturbed depth of the fluid and $\eta(x,y,t)$ represents the free surface deviation above its rest position. If $\boldsymbol{u}(x,y,z,t)=(u(x,y,z,t),v(x,y,z,t))$ denotes the horizontal components and $w(x,y,z,t)$ the vertical component of the fluid velocity vector field,  the assuming an ideal and irrotational flow we can define the velocity potential $\Phi$ as 
\begin{equation} \label{eq:solve}
 \boldsymbol{u}=\nabla \Phi, \ \ \ \ \ \  \ \   w=\Phi_z \ ,
\end{equation}
where $\nabla$ is the two-dimensional gradient operator
$(\frac{\partial }{\partial x},\frac{\partial }{\partial y})$.
Then, the Bernoulli equation and the free-surface boundary 
condition governing the motion of the fluid are formulated in terms of the potential and the surface excursion by
\begin{equation} \label{eq:freesurface}
 \left.\begin{matrix}
\Phi_t+\textstyle\frac{1}{2}(\Phi_x^2+\Phi_y^2+\Phi_z^2)+g \eta=0 \\
\eta_t+\Phi_x \eta_x+\Phi_y \eta_y-\Phi_z=0
\end{matrix}\right\} \quad \text{ on } \ \  z=\eta(x,y,t)\ ,
\end{equation}
where $g$ is is the gravitational acceleration, \cite{Whitham1974}. The derivation of asymptotic models relies on appropriate scaling. Here, we use non-dimensional variables:
\begin{subequations} \label{eq:nondimensional}
\begin{eqnarray}
 \tilde{x}=\frac{x}{l}, \ \  \tilde{y}=\frac{y}{l}, \ \ \ \tilde{z}=\frac{z+h_0}{h_0},
 \ \ \ \tilde{t}=\frac{\sqrt{gh_0}t}{l}\ ,\\
\tilde{\eta}=\frac{\eta}{A}, \ \ \ \tilde{\Phi}=\frac{h_0}{Al\sqrt{gh_0}}\Phi\ ,
\end{eqnarray}
\end{subequations}
where tilde denotes non-dimensional variables, $l$ and $A$ denote a characteristic wavelength and wave amplitude.
The governing equations and boundary conditions for the fully dispersive and fully non-linear irrotational water
wave problem are given by the Euler equations:
\begin{subequations} \label{EulerEqs}
 \begin{eqnarray}
 \beta  \nabla^2 \tilde{\Phi} + \tilde{\Phi}_{zz} &=&0,  \ \ 0<\tilde{z}<\alpha \tilde{\eta }+1\ ,  \label{eq:wholesystem1} \\
\tilde{\Phi}_{\tilde{z}}&=&0, \ \ \tilde{z}=0\ , \label{eq:wholesystem2} \\
\tilde{\Phi}_{\tilde{t}}+\frac{\alpha }{2}\left ( \tilde{\Phi}_{\tilde{x} }^2+\tilde{\Phi}_{\tilde{y} }^2
+\frac{1}{\beta }\tilde{\Phi}_{\tilde{z} }^2\right )+
\tilde{\eta }&=&0,  \ \ \text{on } \tilde{z}=\alpha \tilde{\eta }+1\ ,\label{eq:wholesystem3}  \\ 
\tilde{\eta }_{\tilde{t}}+\alpha \left [ \tilde{\eta}_{\tilde{x}}  \tilde{\Phi}_{\tilde{x}} +\tilde{\eta}_{\tilde{y}} 
\tilde{\Phi}_{\tilde{y}}\right ]-\frac{1}{\beta }\tilde{\Phi}_{\tilde{z}}&=&0 ,  
\ \ \text{on } \tilde{z}=\alpha \tilde{\eta }+1\ ,  \label{eq:wholesystem4}
\end{eqnarray}
\end{subequations}
where $\alpha$ and $\beta$ are measures of nonlinearity and frequency dispersion defined as
\begin{equation} \label{alphabeta}
 \alpha=\frac{A}{h_0}, \ \ \ \ \ \  \ \   \beta =\frac{h_0^2}{l^2}\ .
\end{equation}

The derivation then follows by considering the following formal asymptotic series expansion of the velocity potential $\tilde{\Phi}$,
\begin{equation} \label{eq:asymptoticseries}
 \tilde{\Phi} (\tilde{x},\tilde{y},\tilde{z},\tilde{t})=\sum_{n=0}^{\infty } \tilde{z}^{n}\tilde{\Phi}^{n}(\tilde{x}\ .
 \tilde{y},\tilde{t})\ .
\end{equation}
Substituting (\ref{eq:asymptoticseries}) into  (\ref{eq:wholesystem1}) and the Neumann boundary condition at 
bottom (\ref{eq:wholesystem2}) we obtain a polynomial in $\tilde{z}$ and requiring the coefficient of each power of 
$\tilde{z}$ to vanish, we obtain the classical recurrence relation
\begin{equation} \label{eq:reccurance}
 \tilde{\Phi}^{n+2}=-\beta \frac{\nabla ^{2}\tilde{\Phi}^{n}}{(n+1)(n+2)}, \ \ n=1,2,3,\cdots\ .
\end{equation}
From (\ref{eq:solve}), (\ref{eq:asymptoticseries}) and  (\ref{eq:reccurance}) one obtains the expression for 
velocity potential in the form
\begin{equation} \label{eq:potential}
 \tilde{\Phi}(\tilde{x},\tilde{y},\tilde{z},\tilde{t})=\sum_{n=0}^{\infty } 
 (-1)^{n}\frac{\tilde{z}^{2n} }{(2n)!}\beta ^{n}\nabla ^{2n}\tilde{f}\ ,
\end{equation}
where $\tilde{f}=\tilde{\Phi} (\tilde{x},\tilde{y},0,\tilde{t})$
is the potential evaluated at the bed.
Substituting the above expression for $\tilde{\Phi}$
into  (\ref{eq:wholesystem3}) yields the relation
\begin{equation} \label{eq:sbfc}
 \tilde{f}_{\tilde{t}}-\frac{\beta}{2}  \Delta \tilde{f}_{\tilde{t}} 
 +\tilde{\eta }+\frac{\alpha }{2} |\nabla \tilde{f}|^2 =\mathcal{O}(\alpha \beta ,\beta ^2)\ .
\end{equation}
Differentiating (\ref{eq:sbfc}) with respect to $x$ and $y$ and expressing the equations in terms of the non-dimensional  velocities at the bottom
$ \tilde{f}_{\tilde{x}}= \tilde{u}$, $ \tilde{f}_{\tilde{y}}= \tilde{v}$, we have the following equations
\begin{subequations}
\begin{align}
& \tilde{u}_{\tilde{t}}+\tilde{\eta }_{\tilde{x}}-\frac{1}{2}\beta\Delta  \tilde{u}_{\tilde{t}}+\alpha \left ( \tilde{u}\tilde{u}_{\tilde{x}}+
 \tilde{v}\tilde{v}_{\tilde{x}} \right )=\mathcal{O}(\alpha \beta ,\beta ^2)\ ,  \label{eq:system1}\\
 & \tilde{v}_{\tilde{t}}+\tilde{\eta }_{\tilde{y}}-\frac{1}{2}\beta \Delta \tilde{v}_{\tilde{t}}+\alpha \left ( \tilde{u}\tilde{u}_{\tilde{y}}+
 \tilde{v}\tilde{v}_{\tilde{y}} \right )=\mathcal{O}(\alpha \beta ,\beta ^2)\ , \label{eq:system2}\\
& \tilde{\eta}_{\tilde{t}}+\tilde{u }_{\tilde{x}}+\tilde{v }_{\tilde{y}}-
\frac{1}{6}\beta \left (\Delta \tilde{u}_{\tilde{x}}+\Delta \tilde{v}_{\tilde{y}}\right )+
\alpha \left ( (\tilde{\eta}\tilde{u})_{\tilde{x}}+(\tilde{\eta}\tilde{v})_{\tilde{y}} \right )=
\mathcal{O}(\alpha \beta ,\beta ^2)\ . \label{eq:system3}
\end{align}
\end{subequations}
 Now we let $\tilde{U},\tilde{V}$ are the dimensionless velocities at a 
dimensionless height $\theta$ $ (0\leq \theta \leq 1)$ in the fluid column. A formal use of Taylor's formula shows that
\begin{subequations}
\begin{align}
& \tilde{\Phi}_{\tilde{x}}|_{\tilde{z}=\theta }= \tilde{U}=
\tilde{u}
-\frac{\theta^2}{2}\beta \Delta \tilde{u}+\frac{\theta^4}{24}\beta^2 \Delta^2
 \tilde{u}+\mathcal{O}(\beta ^3)\ , \label{eq:height1}\\
&  \tilde{\Phi}_{\tilde{y}}|_{\tilde{z}=\theta }= \tilde{V}=
\tilde{v}
-\frac{\theta^2}{2}\beta \Delta \tilde{v} +\frac{\theta^4}{24}\beta^2 \Delta^2  \tilde{v}+\mathcal{O}(\beta ^3)\ , \label{eq:height2}
\end{align}
\end{subequations}
 as $\beta \rightarrow 0$. Solving these equations for $\tilde{u}$ and $\tilde{v}$ we have that
 \begin{subequations} 
  \begin{align}
& \tilde{u}=
\left ( 1+ \frac{\theta^2}{2}\beta\Delta +\frac{5 \theta^4}{24}\beta^2 \Delta ^2 \right )
 \tilde{U}+\mathcal{O}(\beta ^3)\ , \label{eq:height21}\\
& \tilde{v}=
\left ( 1+ \frac{\theta^2}{2}\beta\Delta +\frac{5 \theta^4}{24} \beta^2 \Delta ^2 \right )
 \tilde{V}+\mathcal{O}(\beta ^3)\ . \label{eq:height22}
\end{align}
\end{subequations}
 We substitute the relations  (\ref{eq:height21}) and  (\ref{eq:height22}) into (\ref{eq:system1}),
(\ref{eq:system2}) and (\ref{eq:system3}) to obtain
\begin{subequations} 
\begin{align} 
& \tilde{U}_{\tilde{t}}+\tilde{\eta }_{\tilde{x}}+\frac{\beta}{2}(\theta ^2-1) \Delta \tilde{U}_{\tilde{t}}+\alpha \left ( \tilde{U}\tilde{U}_{\tilde{x}}+
 \tilde{V}\tilde{V}_{\tilde{x}} \right )=\mathcal{O}(\alpha \beta ,\beta ^2)\ ,  \label{eq:tsystem1}\\
&  \tilde{V}_{\tilde{t}}+\tilde{\eta }_{\tilde{y}}+\frac{\beta}{2}(\theta ^2-1) \Delta \tilde{V}_{\tilde{t}}+\alpha \left ( \tilde{U}\tilde{U}_{\tilde{y}}+
 \tilde{V}\tilde{V}_{\tilde{y}} \right )=\mathcal{O}(\alpha \beta ,\beta ^2)\ , \label{eq:tsystem2}\\
& \tilde{\eta}_{\tilde{t}}+\tilde{U }_{\tilde{x}}+\tilde{V }_{\tilde{y}}+
\frac{\beta}{2} \left [ \theta ^2-\frac{1}{3} \right ]\left (\Delta \tilde{U}_{\tilde{x}}+\Delta \tilde{V}_{\tilde{y}}\right )  
+\alpha \left ( (\tilde{\eta}\tilde{U})_{\tilde{x}}+(\tilde{\eta}\tilde{V})_{\tilde{y}} \right )=
\mathcal{O}(\alpha \beta ,\beta ^2)\ . \label{eq:tsystem3}
\end{align}
\end{subequations}
Observing that 
\begin{align}
& \tilde{U}_{\tilde{t}}+\tilde{\eta }_{\tilde{x}}=\mathcal{O}(\alpha, \beta)\ ,\\
&  \tilde{V}_{\tilde{t}}+\tilde{\eta }_{\tilde{y}}=\mathcal{O}(\alpha, \beta)\ ,\\
&  \tilde{\eta}_{\tilde{t}}+\tilde{U }_{\tilde{x}}+\tilde{V }_{\tilde{y}}=\mathcal{O}(\alpha, \beta) \ ,
\end{align}
we can write for all $\lambda, \mu\in\mathbb{R}$
\begin{align*}
\beta~ \Delta \left [  \tilde{U}_{\tilde{x}}+ \tilde{V}_{\tilde{y}} \right ] & = \lambda \beta ~ \Delta \left [  \tilde{U}_{\tilde{x}}+ \tilde{V}_{\tilde{y}} \right ] 
+(1-\lambda )\beta ~ \Delta \left [ \tilde{U}_{\tilde{x}}+ \tilde{V}_{\tilde{y}} \right ] \\
& = \lambda \beta ~ \Delta \left [  \tilde{U}_{\tilde{x}}+ \tilde{V}_{\tilde{y}} \right ] 
-(1-\lambda )\beta ~ \Delta \tilde{\eta}_{\tilde{t}} + \mathcal{O}(\alpha\beta, \beta^2) \ , \\
\end{align*}
and also
\begin{align*}
\beta~ \Delta  \tilde{U}_{\tilde{t}} & = \mu \beta ~ \Delta  \tilde{U}_{\tilde{t}}
+(1-\mu )\beta ~ \Delta  \tilde{U}_{\tilde{t}} \\
& = \mu \beta ~ \Delta  \tilde{U}_{\tilde{t}}
-(1-\mu )\beta ~ \Delta  \tilde{\eta }_{\tilde{x}} + \mathcal{O}(\alpha\beta, \beta^2) \ , \\
\end{align*}
and similarly,
$$
\beta~ \Delta  \tilde{V}_{\tilde{t}} =  \mu \beta ~ \Delta  \tilde{V}_{\tilde{t}}
-(1-\mu )\beta ~ \Delta  \tilde{\eta }_{\tilde{y}} + \mathcal{O}(\alpha\beta, \beta^2)\ .
$$

Substitution of these relations into (\ref{eq:tsystem1})--(\ref{eq:tsystem3}) leads to the following general $a-b-c-d$ Boussinesq system
\begin{subequations} 
\begin{align} 
 & \tilde{U}_{\tilde{t}}+\tilde{\eta }_{\tilde{x}}+\alpha \left ( \tilde{U}\tilde{U}_{\tilde{x}}+
 \tilde{V}\tilde{V}_{\tilde{x}} \right )
 +\beta a \Delta \tilde{\eta }_{\tilde{x}}-\beta b  \Delta \tilde{U}_{\tilde{t}}
=\mathcal{O}(\alpha \beta ,\beta ^2) \ , \label{eq:gsystem1}\\
 & \tilde{V}_{\tilde{t}}+\tilde{\eta }_{\tilde{y}}+\alpha \left ( \tilde{U}\tilde{U}_{\tilde{y}}+
 \tilde{V}\tilde{V}_{\tilde{y}} \right )
 +\beta a \Delta \tilde{\eta }_{\tilde{y}} -\beta b   \Delta  \tilde{V}_{\tilde{t}} 
=\mathcal{O}(\alpha \beta ,\beta ^2) \ , \label{eq:gsystem2}\\
& \tilde{\eta}_{\tilde{t}}+\tilde{U }_{\tilde{x}}+\tilde{V }_{\tilde{y}}+
\alpha \left ( (\tilde{\eta}\tilde{U})_{\tilde{x}}+(\tilde{\eta}\tilde{V})_{\tilde{y}} \right ) 
+\beta c \Delta \left (\tilde{U}_{\tilde{x}}+\tilde{V}_{\tilde{y}} \right )
- \beta d \Delta \tilde{\eta} _{\tilde{t}} =
\mathcal{O}(\alpha \beta ,\beta ^2)\ . \label{eq:gsystem3}
\end{align}
 \end{subequations}
 where
 \begin{equation}\label{eq:paramgs}
 \begin{aligned}
 & a= \frac{1}{2} (1-\theta ^2) \mu, \quad b= \frac{1}{2}(1-\theta ^2)(1-\mu),  \\
 & c=\frac{1}{2} \left ( \theta ^2-\frac{1}{3} \right )\lambda, \quad d= \frac{1}{2}\left ( \theta ^2-\frac{1}{3} \right )(1-\lambda)\ .
 \end{aligned}
 \end{equation}
After neglecting the high-order terms and writing the variables in dimensional form system (\ref{eq:gsystem1})--(\ref{eq:gsystem3}) is written as
  \begin{subequations} 
\begin{align} 
 & {U}_{{t}}+g{\eta }_{{x}}+ \left ( {U}{U}_{{x}}+
{V}{V}_{{x}} \right )
  +gh_0^2 a \Delta {\eta }_{{x}} -h_0^2 b  \Delta  {U}_{{t}} =0 \ ,  \label{eq:dsystem1} \\
& {V}_{{t}}+g{\eta }_{{y}}+ \left ( {U}{U}_{y}+
{V}{V}_{{y}} \right )
 +gh_0^2 a \Delta {\eta }_{{y}}
 -h_0^2b  \Delta  {V}_{{t}}
=0 , \label{eq:dsystem2} \\
& {\eta}_{{t}}+h_0\left ( {U }_{{x}}+{V }_{{y}} \right )+
\left ( ({\eta}{U})_{{x}}+({\eta}{V})_{{y}} \right )+h_0^3 c \Delta \left ( {U}_{x}+{V}_{{y}} \right )
- h_0^2d\Delta{\eta} _{{t}}
=
0\ . \label{eq:dsystem3}
\end{align}
\end{subequations}

\vskip 0.1in

\begin{remark}
One should note that by introducing another parameter $\nu$, 
the system \eqref{eq:gsystem1},\eqref{eq:gsystem2},\eqref{eq:gsystem3} can be generalized to the
$a-b-a_1-b_1-c-d$ system
  \begin{subequations} 
\begin{align*} 
 & \tilde{U}_{\tilde{t}}+\tilde{\eta }_{\tilde{x}}+\alpha \left ( \tilde{U}\tilde{U}_{\tilde{x}}+
 \tilde{V}\tilde{V}_{\tilde{x}} \right )
 +\beta a \Delta \tilde{\eta }_{\tilde{x}}-\beta b  \Delta \tilde{U}_{\tilde{t}}
=\mathcal{O}(\alpha \beta ,\beta ^2) \ , \\
 & \tilde{V}_{\tilde{t}}+\tilde{\eta }_{\tilde{y}}+\alpha \left ( \tilde{U}\tilde{U}_{\tilde{y}}+
 \tilde{V}\tilde{V}_{\tilde{y}} \right )
 +\beta a_1 \Delta \tilde{\eta }_{\tilde{y}} -\beta b_1   \Delta  \tilde{V}_{\tilde{t}} 
=\mathcal{O}(\alpha \beta ,\beta ^2) \ , \\
& \tilde{\eta}_{\tilde{t}}+\tilde{U }_{\tilde{x}}+\tilde{V }_{\tilde{y}}+
\alpha \left ( (\tilde{\eta}\tilde{U})_{\tilde{x}}+(\tilde{\eta}\tilde{V})_{\tilde{y}} \right ) 
+\beta c \Delta \left (\tilde{U}_{\tilde{x}}+\tilde{V}_{\tilde{y}} \right )
- \beta d \Delta \tilde{\eta} _{\tilde{t}} =
\mathcal{O}(\alpha \beta ,\beta ^2)\, , 
\end{align*}
 \end{subequations}
where
 \begin{equation*}
\begin{aligned}
 & a_1 = \frac{1}{2} (1-\theta ^2) \nu, \quad b_1 = \frac{1}{2}(1-\theta ^2)(1-\nu).
 \end{aligned}
\end{equation*}
Choosing distinct parameters $\mu$ and $\nu$ could be of interest when studying waves
in the nearshore, where the dominant wave direction is approximately normal to the shoreline.
However, in the present work, we will stick to the four-parameter system 
\eqref{eq:gsystem1},\eqref{eq:gsystem2},\eqref{eq:gsystem3} for the sake of tidiness.
\end{remark}

In order to compute the associated mass, momentum and energy densities and fluxes, we need expressions for the
 velocities and pressure. The velocity field can be easily computed using  (\ref{eq:height1})--(\ref{eq:height2}). The expression for the pressure is obtained from Bernoulli's equation,
 \begin{equation}
 \Phi_t+\frac{1}{2}\left | \nabla_{X,z} \Phi \right |^2=-\frac{P}{\rho }-gz+C\ .
\end{equation}
We can find the constant $C$ by evaluating the previous equation at the free surface $z=\eta$. Specifically, we find
\begin{equation}
  C=\frac{P_{atm}}{\rho }\ ,
\end{equation}
where $P_{atm}$ refers to the atmospheric pressure. We introduce the dynamic pressure with the equation
\begin{equation}
 P'=P-P_{atm}+\rho g z\ ,
\end{equation}
which can be scaled using a typical wave amplitude  by $\rho g A \tilde{P}'=P'$, then
\begin{align*} 
 \tilde{P}'&=-\tilde{\Phi}_{\tilde{t}}-\frac{1}{2}\alpha (\tilde{\Phi}_{\tilde{x}}^2+\tilde{\Phi}_{\tilde{y}}^2)
 -\frac{1}{2}\frac{\alpha }{\beta }(\tilde{\Phi}_{\tilde{z}}^2)\\
&=-\tilde{f}_{\tilde{t}}+ \beta \frac{\tilde{z}^2}{2} \Delta \tilde{f}_{\tilde{t}} 
-\frac{\alpha}{2} \left [ \tilde{f}_{\tilde{x}}^2+\tilde{f}_{\tilde{y}}^2 \right ]+\mathcal{O}(\alpha \beta ,\beta ^2)\ .
\end{align*}
If we use the relation (\ref{eq:sbfc}) and recall the relation 
$ \tilde{f}_{\tilde{x}\tilde{x}\tilde{t}} +\tilde{f}_{\tilde{y}\tilde{y}\tilde{t}}=\tilde{U}_{\tilde{x}\tilde{t}} 
+\tilde{V}_{\tilde{y}\tilde{t}}+\mathcal{O}( \beta )$, then the scaled dynamic pressure becomes
\begin{equation}\label{eq:dpresure}
 \tilde{P}'=\tilde{\eta }+\frac{1}{2}\beta (\tilde{z}^2-1)[\tilde{U}_{\tilde{x}\tilde{t}} 
+\tilde{V}_{\tilde{y}\tilde{t}}]+\mathcal{O}(\alpha \beta ,\beta ^2)\ .
\end{equation}
The  total pressure in-terms of dimensional variables is then given by
\begin{equation}
 P=P_{atm}-\rho g(z-\eta )+\frac{\rho }{2}\left [ (z+h_0)^2-h_0^2 \right ]({U}_{{x}{t}} 
+{V}_{{y}{t}})+\mathcal{O}(\alpha \beta ,\beta ^2)\ .
\end{equation}

The following section is devoted to a mathematically rigorous approach to understanding the validity of the
two-dimensional Boussinesq system as an approximation of the the water-wave problem represented by
the Euler equations \eqref{EulerEqs} with a particular focus on
justifiying approximate mass, momentum and energy balance laws similar to those
presented in \cite{AK2}, \cite{AK3}, \cite{KKM}.

\section{Rigorous approach to balance laws}\label{sec:rigorous}

As we showed in the previous section, appropriate assumptions on the respective magnitude of the parameters $\alpha$ and $\beta$, lead to the derivation of 
(simpler) asymptotic models from the Euler equations. The Stokes number
$$ S=\dsp\frac{\alpha}{\beta}\ ,$$ 
is introduced in order to quantify the applicability of the equation to a particular regime of surface water waves. For the Boussinesq regime, the Stokes number is usually considered to be of order $1$. Here, for the sake simplicity, we assume that the Stokes number is equal to 1 ($\alpha=\beta$), so that we can work with a single small parameter $\alpha$ or $\beta$. Throughout this section we will denote by $\scrO(\beta^n)$, with $n\in\mathbb{N}$ 
any family of functions $(f^\beta)_{\beta\in]0,1[}$
such that $\displaystyle\frac{1}{\beta^n}f^\beta$  remains bounded in $L^{\infty}([0,\frac{T}{\beta}],H^{r})$,
for all $\beta\ll1$ and for possibly different values of $r$.
It is noted that in the sequel we consider the Zakharov-Craig-Sulem formulation of the Euler equations.
Specifically, we consider the Euler equations in terms of the Dirichlet-Neumann operator as follows :
\begin{equation}\label{eq:zakharov}
\left\lbrace
\begin{aligned}
& \teta_{\tilde{t}} -\displaystyle \frac{1}{\beta} \mathcal{G}_{\beta}[\beta\teta] \psi = 0\ , \\
& \psi_{\tilde{t}} +  \teta + \displaystyle\frac{\beta}{2} \vert\nabla\psi\vert^2 
         - \displaystyle\frac{ [ \mathcal{G}_{\beta}[\beta\teta]\psi + \beta^2\nabla\teta\cdot \nabla\psi ]^2 }{2 (1+ \beta^3 \vert\nabla\teta\vert^2)} = 0\ .
\end{aligned}\right.
\end{equation} 
Given a solution of this system, we reconstruct the potential $\tilde{\Phi}$ by solving the Laplace
equation (\ref{eq:bvproblem}) below.  More precisely,  we introduce the trace of the velocity potential at the free surface, defined as
$$
\psi=\tilde{\Phi}_{\mid_{\tilde{z}=1+\beta\teta}}\ ,
$$
and the Dirichlet-Neumann operator  $\mathcal{G}_{\beta}[\beta\teta]\cdot$  as
$$
\mathcal{G}_{\beta}[\beta\teta]\psi=\partial_{\tilde{z}} \tilde{\Phi}_{\mid_{\tilde{z}=1+\beta\teta}}\ ,
$$
with $\tilde{\Phi}$ solving the boundary value problem
\begin{equation}\label{eq:bvproblem}
\left\{
\begin{array}{lcl}
\beta \partial_{\tilde{x}}^2\tilde{\Phi}+\beta\partial_{\tilde{y}}^2\tilde{\Phi}+ \partial_{\tilde{z}}^2 \tilde{\Phi}= 0\ ,\\
\partial_n\tilde{\Phi}_{\mid_{\tilde{z}=0}}=0\ ,\\
\tilde{\Phi}_{\mid_{\tilde{z}=1+\beta\teta}}=\psi\ .
\end{array}
\right.
\end{equation}
We look for an asymptotic expansion of $\tilde{\Phi}$ of the form
\begin{equation}\label{eq:phiapp}
\tilde{\Phi}^{app}=\sum_{j=0}^{N}\beta^j\tilde{\Phi}_j.
\end{equation}
Plugging this expression into the boundary value problem (\ref{eq:bvproblem}) one can cancel the residual up to the order $\mathcal{O}(\beta^{N+1})$ provided that
\begin{equation}\label{eq:phij}
\partial_{\tilde{z}}^2 \tilde{\Phi}_j = -\partial_{\tilde{x}}^2 \tilde{\Phi}_{j-1}-\partial_{\tilde{y}}^2\tilde{\Phi}_{j-1}, \quad j=0,\cdots,N\ ,
\end{equation}
(with the convention that $\tilde{\Phi}_{-1}=0$), together with the boundary conditions
\begin{equation}\label{eq:phij_bvp} 
\left\{
\begin{array}{ll}
\tilde{\Phi}_{j_{\mid_{\tilde{z}=1+\beta\teta}}}=\delta_{0,j}\psi\ ,\\
\partial_{\tilde{z}}\tilde{\Phi}_{j_{\mid_{\tilde{z}=0}}}=0\ ,
\end{array}
\right. \quad j=0,\cdots,N\ ,
\end{equation}
(where $\delta_{0,j}=1$ if $j=0$ and $0$ otherwise). Solving equation (\ref{eq:phij})
with the boundary conditions (\ref{eq:phij_bvp}) as in \cite{KZI2018}
one finds
$$\tilde{\Phi}_0=\psi\ ,$$
$$\tilde{\Phi}_1=-\frac{1}{2} \tilde{z}^2\Delta\psi+\frac{1}{2}\Delta\psi+\beta\teta\Delta\psi+\frac{1}{2}\beta^2\teta^2\Delta\psi \,$$
\begin{align*}
\tilde{\Phi}_2&=\frac{1}{24} \tilde{z}^4\Delta^2\psi-\frac{1}{4}\tilde{z}^2\Delta^2\psi+\frac{5}{24}\Delta^2\psi+\frac{5}{6}\beta\teta\Delta^2\psi-\frac{1}{2}\beta \tilde{z}^2\Delta\teta\Delta\psi+\frac{1}{2}\beta\Delta\teta\Delta\psi\\
&-\beta \tilde{z}^2\nabla\teta\cdot\nabla\Delta\psi+\beta\nabla\teta\cdot\nabla\Delta\psi-\frac{1}{2}\beta \tilde{z}^2\teta\Delta^2\psi+\scrO(\beta^2)\ .
\end{align*}



\subsection{Preliminary Results}\label{sec:main}


We first present results related to the formal approximations of the velocity potential. These results are necessary ingredients not only for the justification of the derivation of the Boussinesq systems but also of the justification of the mechanical balance laws that we will derive in the next section.

We denote by $\tPhi^{Asym}$, the formally defined potential in Section \ref{sec:derivation}, and we begin by decomposing $\tPhi^{Asym}$ into two parts; a finite part $P^{series}$ and a remainder term $R^{series}$,
$$
\tPhi^{Asym}:=\sum_{n=0}^{\infty}(-1)^n\frac{\tilde{z}^{2n}}{(2n)!}\beta^n\nabla^{2n} f =P^{series}+R^{series}\ , 
$$
where
\begin{equation}\label{eq:exp_P}
    P^{series}:=f-\beta \frac{\tilde{z}^2}{2}\Delta f+\beta^2\frac{\tilde{z}^4}{24}\Delta^2 f\ ,
\end{equation}
and
$$f:= \tilde{\Phi}\vert_{\tilde{z}=0}\ .$$
We can consider the following choice of $f$:
\begin{equation}\label{eq:explicit_f}
f=\psi+\beta\frac{1}{2}\Delta \psi+\beta^2\frac{5}{24}\Delta^2\psi+\beta^2\teta\Delta\psi\ .
\end{equation}
\begin{remark}
Note that our choice for $f$ turns out to be well justified (up to order $\beta^3$) using the following definition:
\begin{equation}
\tilde{\Phi}^{app}\vert_{\tilde{z}=0}=\psi+\beta\frac{1}{2}\Delta \psi+\beta^2\frac{5}{24}\Delta^2\psi+\beta^2\teta\Delta\psi+\scrO(\beta^3)\ .
\end{equation}
\end{remark}
Using the new notation $\tilde{u}=f_{\tilde{x}}$ and $\tilde{v}=f_{\tilde{y}}$, we easily verify the following expressions:
\begin{equation} \label{eq:height111}
P^{series}_{\tilde{x}}=
\tilde{u}
-\frac{\tilde{z}^2}{2}\beta \Delta\tilde{u}+\frac{\tilde{z}^4}{24}\beta^2 \Delta^2\tilde{u}\ ,
 \end{equation}
 and
 \begin{equation}\label{eq:height222}
  P^{series}_{\tilde{y}}=
\tilde{v}
-\frac{\tilde{z}^2}{2}\beta \Delta\tilde{v}+\frac{\tilde{z}^4}{24}\beta^2 \Delta^2\tilde{v} \ .
\end{equation}

\begin{theorem}\label{theorem:Pphi}
Let $(\eta^{Euler},\tilde{\Phi})$ be a regular solution  of the Euler system \eqref{EulerEqs}, such that $(\eta^{Euler},\nabla\psi)\in H^s(\R^2)\times H^{s}(\R^2)$ with $s$ large enough. Then, for $0<\tilde{t}<T/\beta$ we have,
$$\vert P^{series}-\tilde{\Phi}^{app}\vert_{L^{\infty}(\Omega_t)}\leq C \beta^3\ ,$$
where $C$ is uniform with respect to the parameter $\beta$.
\end{theorem}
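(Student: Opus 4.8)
The plan is to exploit that \emph{both} $P^{series}$ and $\tPhi^{app}$ are explicit polynomials in $\tz$ whose coefficients are differential operators in $X$ applied to $\psi$ and $\teta$, produced by two related truncation schemes for the \emph{same} Laplace problem \eqref{eq:bvproblem}. Since nothing has to be solved at this stage, the estimate reduces to a purely algebraic comparison of the two expansions followed by a Sobolev bound on the residual. First I would insert the explicit choice \eqref{eq:explicit_f} of $f$ into the definition \eqref{eq:exp_P} of $P^{series}$ and expand, using that $\Delta$ commutes with multiplication by powers of $\tz$, keeping every contribution up to and including order $\beta^2$ and relegating all terms of order $\beta^3$ and higher to a remainder $\mathcal{R}$.

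Carrying this out, the $\beta^0$ coefficient of $P^{series}$ is $\psi$, the $\beta^1$ coefficient is $-\tfrac12\tz^2\Delta\psi+\tfrac12\Delta\psi$, and the $\beta^2$ coefficient is $\tfrac{5}{24}\Delta^2\psi+\teta\Delta\psi-\tfrac{\tz^2}{4}\Delta^2\psi+\tfrac{\tz^4}{24}\Delta^2\psi$. The comparison with $\tPhi^{app}=\tPhi_0+\beta\tPhi_1+\beta^2\tPhi_2+\scrO(\beta^3)$ needs care, because each $\tPhi_j$ itself carries hidden $\beta$-dependence: the top boundary condition in \eqref{eq:phij_bvp} is imposed at the \emph{moving} surface $\tz=1+\beta\teta$, so the displayed expressions for $\tPhi_1$ and $\tPhi_2$ contain explicit powers of $\beta$. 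Consequently the term $\beta\teta\Delta\psi$ sitting inside $\beta\tPhi_1$ actually contributes at order $\beta^2$, and so on. Collecting the genuine order-$\beta^2$ content of $\beta\tPhi_1+\beta^2\tPhi_2$ yields exactly $\teta\Delta\psi+\tfrac{\tz^4}{24}\Delta^2\psi-\tfrac{\tz^2}{4}\Delta^2\psi+\tfrac{5}{24}\Delta^2\psi$, matching $P^{series}$ term for term at orders $\beta^0,\beta^1,\beta^2$. Hence $P^{series}-\tPhi^{app}=\mathcal{R}$, a finite sum of terms of the schematic form $\beta^3\,\tz^{2k}\,Q(\teta)\,\Delta^m\psi$ with $k,m$ bounded and $Q$ a polynomial in $\teta$ and its derivatives.

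It then remains to bound $\mathcal{R}$ in $L^\infty(\Omega_t)$. On $\Omega_t$ one has $0\le\tz\le1+\alpha\teta$, so the powers $\tz^{2k}$ are bounded once $\alpha\vert\teta\vert_{L^\infty}$ is controlled; the remaining spatial factors are finitely many derivatives of $\psi$ and $\teta$, which by the Sobolev embedding $H^s(\R^2)\hookrightarrow L^\infty(\R^2)$ (valid for $s>1$) together with the algebra property of $H^s$ are estimated by $C(\vert\teta\vert_{H^s},\vert\nabla\psi\vert_{H^s})$, provided $s$ is taken large enough to absorb the highest derivative appearing in $\tPhi_2$. This gives $\vert P^{series}-\tPhi^{app}\vert_{L^\infty(\Omega_t)}\le C\beta^3$.

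The main obstacle is not the algebra but the \emph{uniformity of $C$ in $\beta$ over the long time interval} $0<\tilde t<T/\beta$: the constant above depends on the $H^s$ norms of $\teta$ and $\nabla\psi$, and these must remain bounded independently of $\beta$ up to times of order $1/\beta$. This is precisely the long-time well-posedness of the water-wave problem in the shallow-water (Boussinesq) scaling, which I would invoke for the given regular solution rather than re-establish; with that uniform bound in hand, the constant $C$ is manifestly independent of $\beta$, which completes the argument.
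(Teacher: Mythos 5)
Your proposal is correct and follows essentially the same route as the paper's proof: insert the explicit choice \eqref{eq:explicit_f} of $f$ into \eqref{eq:exp_P}, match the resulting expansion with $\tilde{\Phi}^{app}$ term by term through order $\beta^2$ (the paper writes this as the identity $P^{series}=\tilde{\Phi}^{app}+h$ with $h=\beta^3\zeta$), and bound the $\beta^3$ remainder in $L^\infty(\Omega_t)$ using the boundedness of $\tilde{z}$ and the embedding $H^s(\R^2)\hookrightarrow L^\infty(\R^2)$. Your closing concern about uniformity of $C$ on $[0,T/\beta]$ is handled in the paper implicitly, through the standing convention that $\scrO(\beta^n)$ means boundedness of $\beta^{-n}f^\beta$ in $L^\infty([0,T/\beta];H^r)$ together with the ``regular solution'' hypothesis, so invoking long-time well-posedness rather than re-proving it is exactly what the paper does as well.
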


\begin{proof}
We denote by $\teta=\eta^{Euler}$, and we make use of the following identity
\begin{align*}
P^{series}&=f-\beta \frac{\tilde{z}^2}{2}\Delta f+\beta^2\frac{\tilde{z}^4}{24}\Delta^2 f\\
&=\psi+\beta\frac{1}{2}\Delta \psi+\beta^2\frac{5}{24}\Delta^2\psi+\beta^2\teta\Delta\psi-\frac{1}{2}\beta \tilde{z}^2\Delta\psi-\frac{1}{4}\beta^2 \tilde{z}^2 \Delta^2\psi+\frac{1}{24}\beta^2 \tilde{z}^4\Delta^2\psi+\zeta\\
&=\tilde{\Phi}^{app}+h\ ,
\end{align*}
where $h=\beta^3\zeta$ and $\zeta$ is a function of $\tz$, $\teta$ and the derivatives of $\psi$ resulting from the asymptotic expansion of the potential given by $\tilde{\Phi}^{app}$. Looking for solutions
to the water-wave equations with $\psi$ in some Sobolev space would be too restrictive. Therefore, we can assume that the derivatives of $\psi$ belong to some Sobolev space $H^s(\R^2)$ with $s$ large enough so that $H^s(\R^2)\hookrightarrow L^{\infty}(\R^2)$. Using the fact that $\tilde{z}$ is bounded above by $1+\beta\teta$ and $(\eta^{Euler},\nabla\psi)\in H^s(\R^2)\times H^{s}(\R^2)$, we deduce that $h=\scrO(\beta^3)$, and hence the result follows.
\end{proof}
Since we aim in finding a rigorous justification of the formally derived mechanical balance laws for two-dimensional Boussinesq systems, we must first estimate approximations between the velocities $\nabla_{X,z} P^{series}$ and $\nabla_{X,z}\tPhi$ as well as for the pressure $\tP'$.

\subsection{Flattening the Domain}\label{sec:FlatDomain}

The aim of this subsection is to present the method of
"flattening the domain" of \cite{Lannes13} so as to normalize the domain of the differential equations.
Consider the boundary-value problem
\begin{align}\begin{cases}
\beta\partial_{\tilde{x}}^2 \omega + \beta\partial_{\tilde{y}}^2 \omega + \partial_{\tilde{z}}^2 \omega = R \textrm{ in } \Omega_t,\\
\omega\vert_{\tilde{z}=1+\beta\teta}=0,\\
\partial_n \omega\vert_{\tilde{z}=0}=0,
\end{cases}
\end{align}
where $\omega$ and $R$ are smooth functions. Our goal is to give estimates on $\omega$ and $\omega_t$ that justify the results obtained in theorems \ref{theorem:phi_euler_app} and \ref{theorem:tphi_euler_app}. To do that we transform the above system into an elliptic boundary value problem on a fixed domain. We assume that the water height is always bounded from below. In other words, 
\begin{align}\label{eq:h_min}
\exists h_{min} >0,\qquad \forall X=(x,y)\in \R^2,\qquad 1+\beta\teta\geq h_{min} .
\end{align}
\begin{figure}[h!]
\hspace{-7cm}
\begin{tikzpicture}[xscale=1,yscale=.75,font=\footnotesize]
\draw (0,0) -- (5,0);
\draw[thick,->] (0,3) -- (5,3) node[right] {$x$};
\node[draw] at (2.5, 1.5)   (a) {$\mathcal{S}$};
\draw[thick,->] (0,0) -- (0,5) node[right] {$z$};
\draw (-0.55,0) node[left] {$-1$};
\draw (-0.55,3) node[left] {$0$};
\coordinate (R) at (4,-0.5);
\coordinate (B) at (8,-0.5);
\node[] at (6,-1) {$\Sigma(x,z)$};
\draw[thin,->]    (R) to[out=-50,in=230] (B);
\hspace{7cm}
\draw[dashed] (0,3) -- (5,3);
\node[draw] at (2.5, 1.5)   (b) {$\Omega_t$};
\draw plot[domain=0:pi+2.15,samples=100,thick] (\x,{3+.5*cos(deg(\x))}) node[anchor=west]{$1+\alpha\eta(t,x)$};
\draw[thick,->] (0,0) -- (5,0) node[right] {$x$};
\draw[thick,->] (0,0) -- (0,5) node[right] {$z$};
\end{tikzpicture}
\caption{\small The transformation $\Sigma$ in a two-dimensional setting}%
\label{fig:transform}
\end{figure}
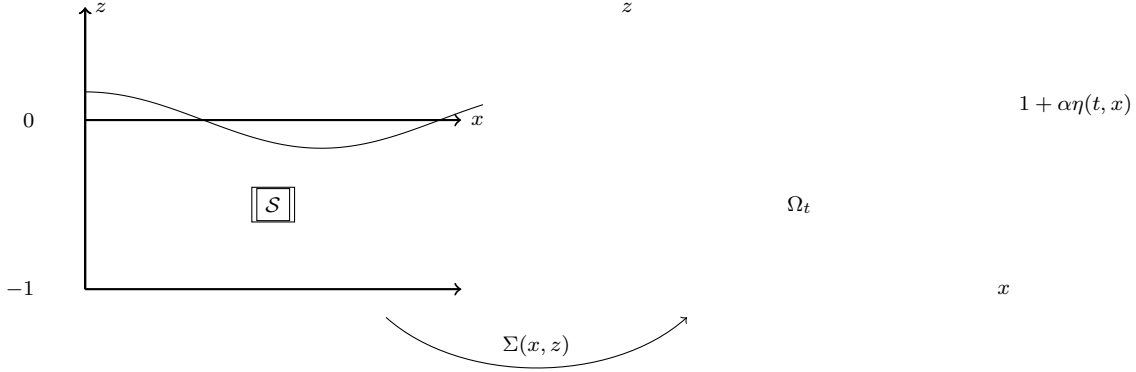%

We transform the variable domain $\Omega_t$ into a flat strip $S=\R^2\times (-1,0)$ by introducing the following diffeomorphism\footnote{We choose here the most obvious diffeomorphism but there are other choices of regularizing diffeomorphisms that are useful for obtaining optimal regularity estimates}  (see Figure \ref{fig:transform}):
\begin{align*}
\Sigma:\,\,&\quad S\longmapsto\Omega_t\\
 &(X,z)\rightarrow\Sigma(X,z)=(X,(1+\beta\teta)z+1+\beta\teta)
\end{align*} 
Then, the new variables $\mathbf{w}=\omega\circ\Sigma$ and $\mathbf{R}=R\circ\Sigma$ satisfy
the following elliptic boundary-value problem on the fixed domain $S$:
\begin{align}\begin{cases}\label{eq:flatdomain}
\displaystyle\frac{1}{1+\partial_{\tilde{z}}\sigma}\nabla_{X,z}^{\beta}\cdot P(\Sigma)\nabla_{X,z}^{\beta} \mathbf{w} = \mathbf{R} \textrm{ in } (-1,0)\times\R^2\\
\mathbf{w}\vert_{\tilde{z}=0}=0\\
e_z \cdot P(\Sigma)\nabla_{X,z}^{\beta} \mathbf{w}\vert_{\tilde{z}=-1}=0
\end{cases}
\end{align}
Here we use the notation 
\begin{align*} 
\sigma(X,\tilde{z})=\beta\teta\tilde{z}+1+\beta\teta,\qquad P(\Sigma)=I + Q(\Sigma),\qquad
 Q(\Sigma)=\left(\begin{array}{c c}
\partial_{\tilde{z}}\sigma I_{2} & -\sqrt{\beta}\nabla\sigma\\
-\sqrt{\beta}\nabla\sigma^T & \frac{-\partial_{\tilde{z}}\sigma+\beta\vert\nabla\sigma\vert^2}{1+\partial_{\tilde{z}}\sigma}
\end{array}\right). \qquad 
\end{align*}
\begin{theorem}\label{theorem:flat-domain}
Let  $\mathbf{R}\in H^{s,0},\, \mathbf{w}\in H^{s+1,1}, s\geq 0 $ solution of \eqref{eq:flatdomain} such that assumption \eqref{eq:h_min} is satisfied, then there exists a constant $C$ such that, for some $t_0>2$, the following estimate holds
\begin{align*}
\vert \Lambda^s \nabla_{X,z}^{\beta}\mathbf{w}\vert_{L^2}\leq C(h^{-1}_{min},\beta_{max},\vert\teta\vert_{H^{s+1\vee t_0+1}})\, \vert \Lambda^s \mathbf{R}\vert_{L^2}\, ,
\end{align*}
where $\beta_{max}$ is an upper bound of the shallowness parameter $\beta$. 
\end{theorem}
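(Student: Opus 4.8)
The plan is to carry out the standard variational energy method for the flattened operator, following the treatment of the Dirichlet--Neumann operator in \cite{Lannes13}, in two stages: a base case $s=0$ obtained from a single integration by parts, and the general case $s>0$ obtained by commuting the horizontal smoothing operator $\Lambda^s$ through the equation and inducting on $s$. The starting point is the coercivity of the matrix $P(\Sigma)=I+Q(\Sigma)$. Writing $a:=1+\partial_{\tz}\sigma=1+\beta\teta$, which is bounded below by $h_{min}$ thanks to \eqref{eq:h_min}, a direct computation shows that the effective $2\times2$ block of $P(\Sigma)$ in the $(\nabla\sigma,e_z)$ plane has determinant exactly $1$, so that $P(\Sigma)$ is positive definite with smallest eigenvalue controlled from below by $1/(a+(1+\beta\abs{\nabla\sigma}^2)/a)$. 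Hence there is a constant $c=c(h_{min}^{-1},\beta_{max},\abs{\teta}_{W^{1,\infty}})>0$ with
\begin{align*}
\int_S \nabla_{X,z}^{\beta}\mathbf{u}\cdot P(\Sigma)\nabla_{X,z}^{\beta}\mathbf{u}\geq c\,\abs{\nabla_{X,z}^{\beta}\mathbf{u}}_{L^2}^2
\end{align*}
for all admissible $\mathbf{u}$.

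For the base case $s=0$, I would multiply \eqref{eq:flatdomain} by $(1+\partial_{\tz}\sigma)\mathbf{w}$ so as to clear the scalar prefactor, integrate over $S=\R^2\times(-1,0)$, and integrate by parts. The two boundary contributions both vanish: the top term because $\mathbf{w}\vert_{\tz=0}=0$, and the bottom term because of the co-normal condition $e_z\cdot P(\Sigma)\nabla_{X,z}^{\beta}\mathbf{w}\vert_{\tz=-1}=0$. The coercivity estimate then gives
\begin{align*}
c\,\abs{\nabla_{X,z}^{\beta}\mathbf{w}}_{L^2}^2\leq \abs{1+\partial_{\tz}\sigma}_{L^\infty}\,\abs{\mathbf{R}}_{L^2}\,\abs{\mathbf{w}}_{L^2}\ ,
\end{align*}
and since $\mathbf{w}\vert_{\tz=0}=0$, a one-dimensional Poincar\'e inequality in the vertical variable yields $\abs{\mathbf{w}}_{L^2}\lesssim\abs{\partial_{\tz}\mathbf{w}}_{L^2}\leq\abs{\nabla_{X,z}^{\beta}\mathbf{w}}_{L^2}$. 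Dividing through establishes the claim for $s=0$.

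For $s>0$, I would apply $\Lambda^s=(1-\Delta)^{s/2}$ to \eqref{eq:flatdomain}. Since $\Lambda^s$ acts only in the horizontal variables, it commutes with $\partial_{\tz}$ and preserves both boundary conditions, so $\Lambda^s\mathbf{w}$ solves the same elliptic problem with right-hand side $\Lambda^s\mathbf{R}+\mathbf{R}_{\mathrm{comm}}$, where $\mathbf{R}_{\mathrm{comm}}$ gathers the commutators of $\Lambda^s$ with the coefficients $(1+\partial_{\tz}\sigma)^{-1}$ and $P(\Sigma)$ acting on $\nabla_{X,z}^{\beta}\mathbf{w}$. Applying the $s=0$ estimate to $\Lambda^s\mathbf{w}$ then reduces the proof to bounding $\abs{\mathbf{R}_{\mathrm{comm}}}_{L^2}$ by $C(h_{min}^{-1},\beta_{max},\abs{\teta}_{H^{s+1\vee t_0+1}})(\abs{\Lambda^s\mathbf{R}}_{L^2}+\abs{\Lambda^{s-1}\nabla_{X,z}^{\beta}\mathbf{w}}_{L^2})$, after which the induction hypothesis at level $s-1$ absorbs the last term and closes the estimate.

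The hard part will be the commutator bookkeeping in the last step. The difficulty is genuinely anisotropic: $\Lambda^s$ smooths only horizontally, whereas $\nabla_{X,z}^{\beta}\mathbf{w}$ carries a vertical derivative, so the commutators must be estimated on the strip within the scale $H^{s,k}$ rather than on $\R^2$, using the product and trace estimates that those spaces carry. This is exactly where the regularity threshold enters: one needs $\teta\in H^{t_0+1}$ with $t_0>2$ so that the coefficients of $P(\Sigma)$ lie in $W^{1,\infty}$ through the embedding $H^{t_0}\hookrightarrow L^\infty$, ensuring the low-frequency part of each commutator is controlled, and one needs $\teta\in H^{s+1}$ to move $s$ horizontal derivatives onto the coefficients without losing regularity in the high-frequency part, which together account for the norm $\abs{\teta}_{H^{s+1\vee t_0+1}}$ appearing in the constant.
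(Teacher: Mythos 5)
Your proposal follows essentially the same route as the paper's own proof: a variational estimate for $s=0$ using coercivity of $P(\Sigma)$ together with the vertical Poincar\'e inequality, followed by induction on $s$ via commutators with $\Lambda^s$ (a step the paper itself only sketches, deferring to \cite{Lannes13}). Your treatment is in fact somewhat more careful than the paper's at two points — you justify coercivity explicitly via the determinant-one computation for the $2\times 2$ block, and you multiply by $(1+\partial_{\tilde z}\sigma)\mathbf{w}$ to handle the scalar prefactor cleanly — but these are refinements of, not departures from, the paper's argument.
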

\begin{proof}
We briefly state the results, 
first for the case $s=0$. Multiplying the first equation of 
\eqref{eq:flatdomain} by $\mathbf{w}$ and integrating yields
\begin{align*}
\int_{S} \nabla_{X,z}^{\beta}\mathbf{w}\cdot P(\Sigma)\nabla_{X,z}^{\beta}\mathbf{w}=\int_S \mathbf{R}\mathbf{w}.
\end{align*}
Using the coercivity\footnote{The coercivity of $P(\Sigma)$ follows from
  the choice of the diffeomorphism, and we have  $\forall \theta\in\R^3, \,  \forall (X,z)\in \mathcal{S},\,  \exists K>0 / P(\Sigma)(X,z)\theta . \theta \geq K |\theta|^2$} of $P(\Sigma)$ along with  Poincar\'e inequality one gets the following estimate:
\begin{align*}
\vert\nabla_{X,z}^{\beta}\mathbf{w}\vert^2_{L^2}&\leq C(h_{min}^{-1},\beta_{\max},\vert\teta\vert_{H^{t_0+2}}) \vert\mathbf{R}\vert_{L^2}\vert\mathbf{w}\vert_{L^2},\\
&\leq C\,\vert\mathbf{R}\vert_{L^2}\vert\nabla_{X,z}^{\beta}\mathbf{w}\vert_{L^2},
\end{align*}
which proves the theorem for $s=0$. The result follows by an induction relation for $s>0$.
For details of this argument, the reader may consult \cite{Lannes13}.
\end{proof}
\begin{theorem}\label{theorem:phi_euler_app}
Let $(\eta^{Euler},\tilde{\Phi})$ be a regular solution  of the Euler system  such that $(\eta^{Euler},\nabla\psi)\in H^s(\R^2)\times H^{s}(\R^2)$ with $s$ large enough. Assume that the total water depth satisfies \eqref{eq:h_min}. Then, for  $0<\tilde{t}<T/\beta$ we have,
$$\vert\tilde{\Phi}_{\tilde{x}}^{app}-\tilde{\Phi}_{\tilde{x}}\vert_{L^{\infty}(\Omega_t)}\leq C\beta^2\ ,$$
$$\vert\tilde{\Phi}_{\tilde{y}}^{app}-\tilde{\Phi}_{\tilde{y}}\vert_{L^{\infty}(\Omega_t)}\leq C\beta^2\ ,$$
$$\vert\tilde{\Phi}_{\tilde{z}}^{app}-\tilde{\Phi}_{\tilde{z}}\vert_{L^{\infty}(\Omega_t)}\leq C\beta^2\ ,$$
where $C$ is a constant independent of $\beta$.
\end{theorem}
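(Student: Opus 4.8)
The plan is to control the error $\omega := \tilde{\Phi}^{app}-\tilde{\Phi}$ between the approximate and the exact potential by recognizing that it solves precisely the homogeneous boundary-value problem treated by the flattening machinery of Subsection \ref{sec:FlatDomain}. First I would observe that, by construction of the expansion \eqref{eq:phiapp}, the recurrence \eqref{eq:phij} together with the boundary conditions \eqref{eq:phij_bvp} is exactly what makes $\tilde{\Phi}^{app}$ reproduce the surface datum ($\tilde{\Phi}_0=\psi$ and $\tilde{\Phi}_j\vert_{\tilde{z}=1+\beta\teta}=0$ for $j\geq1$) and the flat-bottom Neumann condition term by term. Subtracting the exact problem \eqref{eq:bvproblem} from the one satisfied by $\tilde{\Phi}^{app}$, the error $\omega$ therefore satisfies
\[
\beta\partial_{\tilde{x}}^2\omega+\beta\partial_{\tilde{y}}^2\omega+\partial_{\tilde{z}}^2\omega = R \quad\text{in }\Omega_t,\qquad \omega\vert_{\tilde{z}=1+\beta\teta}=0,\qquad \partial_n\omega\vert_{\tilde{z}=0}=0,
\]
with genuinely homogeneous boundary data and an interior residual $R$ which, as already noted below \eqref{eq:phiapp}, is $\scrO(\beta^{N+1})$; taking $N=2$ (the truncation underlying $P^{series}$ and the $\tilde{\Phi}^{app}$ used above) gives $R=\scrO(\beta^{3})$.

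Next I would flatten the domain with the diffeomorphism $\Sigma$ of Subsection \ref{sec:FlatDomain}, setting $\mathbf{w}=\omega\circ\Sigma$ and $\mathbf{R}=R\circ\Sigma$, so that $\mathbf{w}$ solves the fixed-domain elliptic problem \eqref{eq:flatdomain}: the surface $\tilde{z}=1+\beta\teta$ maps to the homogeneous Dirichlet face and the bottom to the co-normal face, matching the boundary conditions there. Applying Theorem \ref{theorem:flat-domain} at a regularity level $s$ large enough for the embeddings below then yields
\[
\vert\Lambda^s\nabla_{X,z}^{\beta}\mathbf{w}\vert_{L^2}\leq C\,\vert\Lambda^s\mathbf{R}\vert_{L^2}\leq C\beta^{3},
\]
where the constant $C$ depends only on $h_{min}^{-1}$, an upper bound for $\beta$, and a Sobolev norm of $\teta$. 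The minimum-depth assumption \eqref{eq:h_min} is exactly what keeps $\Sigma$ non-degenerate and $C$ uniform in $\beta$, while the long-time range $0<\tilde{t}<T/\beta$ is absorbed through the definition of $\scrO(\beta^n)$ together with the control of $\teta$ in $L^\infty([0,T/\beta],H^r)$ coming from the well-posedness theory of the water-wave problem.

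It then remains to convert this scaled $L^2$-estimate on the flattened function into the pointwise bounds on the \emph{physical} gradient. Reading off the components of $\nabla_{X,z}^{\beta}\mathbf{w}=(\sqrt{\beta}\,\nabla\mathbf{w},\partial_z\mathbf{w})$, the estimate controls $\vert\Lambda^s\partial_z\mathbf{w}\vert_{L^2}=\scrO(\beta^{3})$ and $\vert\Lambda^s\nabla\mathbf{w}\vert_{L^2}=\scrO(\beta^{5/2})$. An elliptic bootstrap in the equation recovers the higher vertical derivatives and builds the full $H^{s,k}$ norm of $\mathbf{w}$ with the same powers of $\beta$, after which the Sobolev embedding $H^{s,k}\hookrightarrow L^\infty(S)$ (valid for $s$ large enough in two horizontal dimensions) gives the corresponding $L^\infty$ bounds. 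Unwinding the chain rule for $\Sigma$, whose Jacobian factors are $O(1)$ in $\beta$ and controlled by $h_{min}^{-1}$ and $\teta$, transfers these to the physical derivatives $\omega_{\tilde{x}},\omega_{\tilde{y}}=\scrO(\beta^{5/2})$ and $\omega_{\tilde{z}}=\scrO(\beta^{3})$ in $L^\infty(\Omega_t)$. Since $\beta^{3}\leq\beta^{5/2}\leq\beta^{2}$ for $\beta\ll1$, all three stated estimates follow with the common bound $C\beta^{2}$.

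The main obstacle I expect is precisely this last passage: transferring the $L^2$-based, $\sqrt{\beta}$-anisotropic flat-domain estimate into an honest $L^\infty$ bound on the physical gradient while both preserving the correct power of $\beta$ and retaining uniformity over the long time interval. Concretely one must (i) carry out the elliptic regularity bootstrap needed to control the vertical derivatives, and not merely $\nabla_{X,z}^{\beta}\mathbf{w}$ in $L^2$, before any embedding into $L^\infty$ can be invoked, and (ii) bookkeep the diffeomorphism $\Sigma$ and the $\nabla_{X,z}^{\beta}$ scaling carefully enough that the factors introduced by the change of variables stay $O(1)$ in $\beta$ and finite up to $\tilde{t}=T/\beta$, which is where the uniform lower bound on the water depth \eqref{eq:h_min} is indispensable.
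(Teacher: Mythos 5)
Your proposal is correct and follows essentially the same route as the paper's proof: the same decomposition $\omega=\tilde{\Phi}^{app}-\tilde{\Phi}$ solving the homogeneous boundary-value problem \eqref{eq:u_bvp} with residual $\scrO(\beta^3)$, the same flattening via $\Sigma$ and application of Theorem \ref{theorem:flat-domain}, and the same passage to $L^\infty$ (the paper realizes your ``elliptic bootstrap'' concretely through the embedding $H^{s-1,1}(S)\hookrightarrow L^{\infty}((-1,0);H^{s-3/2}(\R^2))$ followed by $H^{s-3/2}(\R^2)\hookrightarrow L^\infty(\R^2)$, then the bounded-Jacobian chain-rule transfer back to $\Omega_t$). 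Your explicit bookkeeping of the anisotropic scaling, yielding $\beta^{5/2}$ for the horizontal derivatives and $\beta^3$ for the vertical one, both dominated by the claimed $\beta^2$, matches what the paper's estimate on $\nabla^{\beta}_{X,z}\mathbf{w}$ implicitly delivers.
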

\begin{proof}
Let $\omega=\tilde{\Phi}^{app}-\tilde{\Phi}$. Since $\tilde{\Phi}$ is a solution of \eqref{eq:bvproblem} then its asymptotic expansion given by $\tilde{\Phi}^{app}$ satisfies
\begin{align*}\begin{cases}
\beta\partial_{\tilde{x}}^2\tilde{\Phi}^{app}+\beta\partial_{\tilde{y}}^2\tilde{\Phi}^{app}+\partial_{\tilde{z}}^2\tilde{\Phi}^{app}=\beta^3 r\,,\\
\tilde{\Phi}^{app}\vert_{\tilde{z}=1+\beta\teta}=\psi\,,\\
\partial_n \tilde{\Phi}^{app}\vert_{\tilde{z}=0}=0\,,\end{cases}
\end{align*}
where $r$ is a regular function in terms of $\tilde{z}$ and the derivatives of $\psi$.\newline 
Moreover, it is evident that
$$\partial_n \omega\vert_{\tilde{z}=0}=0 \quad \textrm{ and } \quad  \omega\vert_{\tilde{z}=1+\beta\teta}=0\ .$$
Then $\omega$ satisfies the following boundary-value problem:
\begin{equation}\label{eq:u_bvp}
\begin{cases}
\beta\partial_{\tilde{x}}^2 \omega + \beta\partial_{\tilde{y}}^2 \omega+\partial_{\tilde{z}}^2 \omega = \beta^3 r\ ,\\
\partial_n \omega\vert_{\tilde{z}=0}=0\ ,\\
\omega\vert_{\tilde{z}=1+\beta\teta}=0\ .
\end{cases}
\end{equation}
Let $\Sigma$ be the diffeomorphism defined in section \ref{sec:FlatDomain}. Using $\Sigma$, we transform system \eqref{eq:u_bvp} into a boundary value problem on a flat strip $S=(-1,0)\times\R^2$ and hence it follows by theorem \ref{theorem:flat-domain} that there exists a constant $C$ such that:
\begin{align*}
\vert \Lambda^s \nabla_{X,z}^{\beta}\mathbf{w}\vert_{L^2}\leq C \beta^3 \, \vert \Lambda^s \mathbf{r}\vert_{L^2}\,,
\end{align*}
where $\mathbf{w}=\omega\circ\Sigma$ and $\mathbf{r}=r\circ\Sigma$. We shall use the fact that $H^{s-1,1}(S)\hookrightarrow L^{\infty}((-1,0);H^{s-3/2}(\R^2))$ with the following estimate
\begin{align*}
\vert\nabla^{\beta}_{X,z}\mathbf{w}\vert_{H^{s-1,1}(S)}&=\vert \Lambda^{s-1}\nabla^{\beta}_{X,z}\mathbf{w}\vert_{L^2}+\vert\Lambda^{s-1}\nabla^{\beta}_{X,z}\dz\mathbf{w}\vert_{L^2}\\
 &\leq C(h_{min}^{-1},\beta_{max},\vert\teta\vert_{H^{s+1}},\vert Q\vert_{L^{\infty}H^s},\vert Q\vert_{H^{s,1}},\vert\dz Q\vert_{L^{\infty}H^s})\vert\Lambda^s \nabla_{X,z}^{\beta}\mathbf{w}\vert_{L^2}\ .
\end{align*}
Moreover, for $s$ large enough we have $H^{s-3/2}(\R^2)\hookrightarrow L^\infty(\R^2)$, therefore by using the above information we get
\begin{align*}
\vert\nabla^{\beta}_{X,z}\mathbf{w}\vert_{L^{\infty}(S)}&=\esssup_{(X,z)\in S}\vert\nabla^{\beta}_{X,z}\mathbf{w}\vert\\
 &\leq \vert\nabla^{\beta}_{X,z}\mathbf{w}\vert_{L^{\infty}((-1,0);H^{s-3/2}(\R^2))}\\
 &\leq C \beta^3. 
\end{align*}
where $C$ is a constant independent of $\beta$. 
The last estimate remains true on $\Omega_t$ for $\nabla^{\beta}_{X,z}\omega$. Since, after changing variables we have $$\nabla^{\beta}_{X,z}\omega=\nabla^{\beta}_{X,z}\left(\mathbf{w}\circ\Sigma^{-1}\right)=J_{\Sigma^{-1}}^T\left(\nabla^{\beta}_{X,z}\mathbf{w}\circ\Sigma^{-1}\right),$$
where the coefficients of the Jacobian matrix $J_{\Sigma^{-1}}^T$ are bounded.
\end{proof}
In the following corollary, we justify rigorously the equations (\ref{eq:height1}) and (\ref{eq:height2}),    
\begin{corollary}\label{corollary:pphi_deriv}
Let $(\eta^{Euler},\tilde{\Phi})$ be a regular solution  of the Euler system \eqref{EulerEqs} such that $(\eta^{Euler},\nabla\psi)\in H^s(\R^2)\times H^{s}(\R^2)$ with $s$ large enough. Then, for $0<\tilde{t}<T/\beta$ we have,
\begin{align}
\vert\tilde{\Phi}_{\tilde{x}}-P^{series}_{\tilde{x}}\vert_{L^{\infty}(\Omega_t)}\leq C\beta^2\ ,\\
\vert\tilde{\Phi}_{\tilde{y}}-P^{series}_{\tilde{y}}\vert_{L^{\infty}(\Omega_t)}\leq C\beta^2\ ,\\
\vert \tilde{\Phi}_{\tilde{z}}- P_{\tilde{z}}^{series}\vert_{L^{\infty}(\Omega_t)}\leq C\beta^2\ .
\end{align}
Where C is uniform with respect to the parameter $\beta$.
\end{corollary}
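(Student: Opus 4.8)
The plan is to derive the three estimates from the triangle inequality, using Theorem~\ref{theorem:phi_euler_app} to compare $\tilde{\Phi}$ with the explicit approximation $\tilde{\Phi}^{app}$, and then comparing $\tilde{\Phi}^{app}$ with $P^{series}$. The first comparison is exactly the content of Theorem~\ref{theorem:phi_euler_app}, which gives $\vert\tilde{\Phi}_{\tilde{x}}^{app}-\tilde{\Phi}_{\tilde{x}}\vert_{L^{\infty}(\Omega_t)}\leq C\beta^2$ and the analogous bounds for the $\tilde{y}$- and $\tilde{z}$-derivatives. It therefore remains only to control the difference $P^{series}-\tilde{\Phi}^{app}$ at the level of its derivatives.

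The subtlety is that Theorem~\ref{theorem:Pphi} bounds $P^{series}-\tilde{\Phi}^{app}$ in $L^{\infty}(\Omega_t)$ but says nothing directly about its derivatives, and one cannot differentiate an $L^{\infty}$ inequality. I would instead return to the identity established in the proof of Theorem~\ref{theorem:Pphi}, namely $P^{series}=\tilde{\Phi}^{app}+h$ with $h=\beta^3\zeta$, where $\zeta$ is an explicit function of $\tilde{z}$, $\teta$, and the spatial derivatives of $\psi$. Differentiating this identity gives $P^{series}_{\tilde{x}}-\tilde{\Phi}_{\tilde{x}}^{app}=\beta^3\zeta_{\tilde{x}}$, and analogously for the $\tilde{y}$- and $\tilde{z}$-derivatives. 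Each derivative of $\zeta$ costs at most one additional spatial derivative of $\psi$ (or of $\teta$), which remains bounded in $L^{\infty}(\Omega_t)$ once $s$ is chosen large enough that the relevant Sobolev embedding $H^{s-1}(\R^2)\hookrightarrow L^{\infty}(\R^2)$ still holds after this loss; the $\tilde{z}$-dependence of $\zeta$ is polynomial and $\tilde{z}$ is bounded by $1+\beta\teta$ on $\Omega_t$, so the vertical derivative is equally harmless. Hence each of the three differences $P^{series}_{\tilde{x}}-\tilde{\Phi}_{\tilde{x}}^{app}$, $P^{series}_{\tilde{y}}-\tilde{\Phi}_{\tilde{y}}^{app}$, $P^{series}_{\tilde{z}}-\tilde{\Phi}_{\tilde{z}}^{app}$ is $\scrO(\beta^3)$, in particular bounded by $C\beta^3$ uniformly in $\beta$.

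Combining the two steps by the triangle inequality then yields
\begin{align*}
\vert\tilde{\Phi}_{\tilde{x}}-P^{series}_{\tilde{x}}\vert_{L^{\infty}(\Omega_t)}\leq \vert\tilde{\Phi}_{\tilde{x}}-\tilde{\Phi}_{\tilde{x}}^{app}\vert_{L^{\infty}(\Omega_t)}+\vert\tilde{\Phi}_{\tilde{x}}^{app}-P^{series}_{\tilde{x}}\vert_{L^{\infty}(\Omega_t)}\leq C\beta^2+C\beta^3\leq C\beta^2,
\end{align*}
for $\beta\ll1$, with $C$ independent of $\beta$; the $\tilde{y}$- and $\tilde{z}$-estimates follow in exactly the same way. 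The main obstacle is precisely the passage from the $L^{\infty}$ bound of Theorem~\ref{theorem:Pphi} to a bound on derivatives: it is resolved not by differentiating that inequality, but by exploiting the explicit structure $h=\beta^3\zeta$ together with the Sobolev regularity of $(\eta^{Euler},\nabla\psi)$, which is what permits differentiating $\zeta$ while preserving the order $\beta^3$.
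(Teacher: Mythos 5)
Your proposal is correct and follows essentially the same route as the paper: a triangle inequality combining the $\mathcal{O}(\beta^2)$ bounds of Theorem~\ref{theorem:phi_euler_app} with the differentiated identity $P^{series}-\tilde{\Phi}^{app}=\beta^3\zeta$ from the proof of Theorem~\ref{theorem:Pphi}, controlled via Sobolev embedding and the boundedness of $\tilde{z}$ on $\Omega_t$. Your explicit remark that one must differentiate the identity rather than the $L^{\infty}$ inequality is precisely the (implicit) mechanism in the paper's own proof.
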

\begin{proof} 
We proceed by using a triangle inequality followed by previous estimates.
In fact, from Theorem \ref{theorem:Pphi} we have
\begin{align*}
P^{series}_{\tilde{x}}-\tPhi^{app}_{\tilde{x}}=h_{\tilde{x}},
\end{align*}
where $h_{\tilde{x}}=\beta^3\zeta_{\tilde{x}}$ and $\zeta$ is a function of $\tz$, $\teta$ and the derivatives of $\psi$ regular enough. Therefore, for $s$ large enough such that 
$H^s(\R^2)\hookrightarrow L^{\infty}(\R^2)$ with $\tz$ bounded above by $1+\beta\teta$, one can write
\begin{align*}
 \vert\tilde{\Phi}_{\tilde{x}}- P^{series}_{\tilde{x}}\vert_{L^{\infty}(\Omega_t)}&\leq \vert \tilde{\Phi}_{\tilde{x}}-\tilde{\Phi}_{\tilde{x}}^{app}\vert_{{L^{\infty}(\Omega_t)}}+\vert\tilde{\Phi}_{\tilde{x}}^{app}- P^{series}_{\tilde{x}}\vert_{{L^{\infty}(\Omega_t)}}\\
 &\leq C\beta^2\ ,
 \end{align*}
where C is uniform with respect to the parameter $\beta$. 
Similar considerations apply to the second and third inequalities.
\end{proof}

\subsection{Expression for the pressure}\label{sec:pressure}

The aim of this section is to find an approximation of the pressure defined in the context of 
the Euler equations. We begin with a useful remark derived from the Zakharov-Craig-Sulem equations, 
cf. \cite{La-Bo}. 

\begin{remark}\label{remark:zakharov}
Let $\teta\in H^{s+1/2}(\R^2)\cap H^{t_0+2}(\R^2)$ with $s\geq 0$, $t_0>1$ satisfying \eqref{eq:h_min}.
Then, the following mappings are continuous:
\begin{align}
\mathcal{G}_{\beta}[\beta\teta]\colon \dot{H}^{s+1}(\R^2)&\to H^{s-1/2}(\R^2)\\
\psi &\mapsto \mathcal{G}_{\beta}[\beta\teta]\psi
\end{align}
\begin{align}
\nu[\beta\teta]\colon 
\dot{H}^{s+1/2}(\R^2)&\to H^{s-1/2}(\R^2)\\
\psi &\mapsto \frac{ [ \mathcal{G}_{\beta}[\beta\teta]\psi + \beta^2\nabla\teta\cdot \nabla\psi ]^2 }{2 (1+ \beta^3 \vert\nabla\teta\vert^2)}
\end{align}
\end{remark} 

\begin{theorem}\label{theorem:tphi_euler_app}
Let $(\eta^{Euler},\tilde{\Phi})$ be a regular solution  
of the Euler equations, such that $(\eta^{Euler},\nabla\psi)\in H^s(\R^2)\times H^{s}(\R^2)$ 
with $s$ large enough. Assume that the total water depth satisfies \eqref{eq:h_min}. 
Then, for $0<\tilde{t}<T/\beta$ we have,
$$\vert\tilde{\Phi}^{app}_{\tilde{t}}-\tPhi_{\tilde{t}}\vert_{L^{\infty}(\Omega_t)}\leq C\beta^2\,,$$
where $C$ is independent of $\beta$.
\end{theorem}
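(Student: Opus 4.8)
The plan is to reduce the estimate to the elliptic machinery already developed for Theorem~\ref{theorem:phi_euler_app} by differentiating the governing boundary-value problem in time. Set $\omega:=\tilde{\Phi}^{app}-\tilde{\Phi}$, so that $\tilde{\Phi}^{app}_{\tilde{t}}-\tilde{\Phi}_{\tilde{t}}=\omega_{\tilde{t}}$ and, by the proof of Theorem~\ref{theorem:phi_euler_app}, $\omega$ solves the boundary-value problem \eqref{eq:u_bvp}. Since the elliptic operator $\beta\partial_{\tilde{x}}^2+\beta\partial_{\tilde{y}}^2+\partial_{\tilde{z}}^2$ carries no explicit time dependence, differentiating the interior equation in $\tilde{t}$ yields
\begin{equation*}
\beta\partial_{\tilde{x}}^2\omega_{\tilde{t}}+\beta\partial_{\tilde{y}}^2\omega_{\tilde{t}}+\partial_{\tilde{z}}^2\omega_{\tilde{t}}=\beta^3 r_{\tilde{t}},
\end{equation*}
where $r_{\tilde{t}}$ stays of order $\scrO(1)$ because the time derivatives of $\psi$ and $\teta$ entering $r$ are controlled through the Zakharov--Craig--Sulem system \eqref{eq:zakharov}; in particular $\teta_{\tilde{t}}=\frac{1}{\beta}\mathcal{G}_{\beta}[\beta\teta]\psi$ is $\scrO(1)$ uniformly in $\beta$ by the mapping properties recalled in Remark~\ref{remark:zakharov}. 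The bottom condition differentiates cleanly, $\partial_n\omega_{\tilde{t}}\vert_{\tilde{z}=0}=0$, since the bed is fixed.

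The essential new feature, and the main obstacle, is the top boundary condition, which is imposed on the \emph{moving} surface $\tilde{z}=1+\beta\teta$. Differentiating the identity $\omega(\tilde{t},X,1+\beta\teta(\tilde{t},X))=0$ in time via the chain rule gives
\begin{equation*}
\omega_{\tilde{t}}\vert_{\tilde{z}=1+\beta\teta}=-\beta\,\teta_{\tilde{t}}\,\omega_{\tilde{z}}\vert_{\tilde{z}=1+\beta\teta},
\end{equation*}
which is no longer homogeneous. Its size is nonetheless small: by Theorem~\ref{theorem:phi_euler_app} we control $\omega_{\tilde{z}}=\tilde{\Phi}^{app}_{\tilde{z}}-\tilde{\Phi}_{\tilde{z}}$ by $C\beta^2$ (indeed by $C\beta^3$ from the underlying $\nabla_{X,z}^{\beta}$ estimate), so that together with the uniform bound on $\teta_{\tilde{t}}$ and a Moser product estimate the boundary datum is of order $\scrO(\beta^3)$ in the relevant Sobolev norm.

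To cast the problem into the form required by Theorem~\ref{theorem:flat-domain}, I would homogenize the boundary condition. Choose a lift $\chi$ with $\chi\vert_{\tilde{z}=1+\beta\teta}=\omega_{\tilde{t}}\vert_{\tilde{z}=1+\beta\teta}$, $\partial_n\chi\vert_{\tilde{z}=0}=0$, and $\chi=\scrO(\beta^3)$ in the pertinent norms; then $v:=\omega_{\tilde{t}}-\chi$ solves a problem of the type \eqref{eq:u_bvp} with homogeneous boundary data and interior source $\beta^3 r_{\tilde{t}}-(\beta\Delta+\partial_{\tilde{z}}^2)\chi=\scrO(\beta^3)$. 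Flattening the domain with the diffeomorphism $\Sigma$ and applying Theorem~\ref{theorem:flat-domain} gives $\vert\Lambda^s\nabla_{X,z}^{\beta}\mathbf{v}\vert_{L^2}\leq C\beta^3$; the Sobolev chain $H^{s-1,1}(S)\hookrightarrow L^{\infty}((-1,0);H^{s-3/2}(\R^2))\hookrightarrow L^{\infty}(S)$ used in the proof of Theorem~\ref{theorem:phi_euler_app} then yields $\vert\nabla_{X,z}^{\beta}v\vert_{L^{\infty}}\leq C\beta^3$.

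Finally, since $v$ vanishes on the top boundary, integrating $\partial_{\tilde{z}}v$ in $\tilde{z}$ from the surface gives $\vert v\vert_{L^{\infty}(\Omega_t)}\leq C\beta^3$, whence
\begin{equation*}
\vert\omega_{\tilde{t}}\vert_{L^{\infty}(\Omega_t)}\leq\vert v\vert_{L^{\infty}(\Omega_t)}+\vert\chi\vert_{L^{\infty}(\Omega_t)}\leq C\beta^3\leq C\beta^2,
\end{equation*}
after transporting back from $S$ to $\Omega_t$ exactly as in Theorem~\ref{theorem:phi_euler_app}; this is the claimed bound. The delicate points to watch are the uniform-in-$\beta$ control of $\teta_{\tilde{t}}$ on the long time interval $0<\tilde{t}<T/\beta$, and the construction of a lift whose full $(X,z)$-Laplacian does not degrade the $\scrO(\beta^3)$ order of the source term.
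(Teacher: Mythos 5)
Your proposal is correct in substance and reaches the claimed bound, but it takes a genuinely different route from the paper. The paper's proof avoids the moving-boundary difficulty that occupies most of your argument by reversing the order of operations: it first transports $\omega=\tilde{\Phi}^{app}-\tilde{\Phi}$ to the fixed strip $S$ via $\Sigma$ (so that $\mathbf{w}=\omega\circ\Sigma$ solves \eqref{eq:flatdomain} with a residual source), and only then differentiates in time. Since the strip and its boundary conditions are time-independent, the homogeneous Dirichlet and co-normal conditions are preserved under $\partial_{\tilde{t}}$, and the time derivative falls instead on the coefficient matrix $P(\Sigma)$ (through $\sigma$, hence through $\beta\teta_{\tilde t}$) and on the source; the resulting estimate $\vert\Lambda^s\nabla^{\beta}_{X,z}\mathbf{w}_{\tilde{t}}\vert_{L^2}\leq C\beta^3$ is obtained by invoking Lemma 5.4 of \cite{Lannes13}, after checking via the cascade $\phi_0=\psi$, $\phi=\psi+h$ and Remark \ref{remark:zakharov} that the time derivatives of the data are controlled. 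Your approach differentiates first, in the moving domain, which produces the inhomogeneous condition $\omega_{\tilde{t}}\vert_{\tilde{z}=1+\beta\teta}=-\beta\,\teta_{\tilde{t}}\,\omega_{\tilde{z}}\vert_{\tilde{z}=1+\beta\teta}$ that you then remove by a lifting argument. Note that the two proofs handle the same underlying term in different guises: the paper controls the commutator $\nabla^{\beta}_{X,z}\cdot(\partial_{\tilde{t}}P(\Sigma))\nabla^{\beta}_{X,z}\mathbf{w}$ using the already-established $\scrO(\beta^3)$ bound on $\nabla^{\beta}_{X,z}\mathbf{w}$, while you control the boundary datum using the same prior estimate through the trace of $\omega_{\tilde{z}}$. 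What the paper's route buys is economy: no lift, no surface trace estimate, and homogeneous boundary data so that Theorem \ref{theorem:flat-domain} applies verbatim. What your route buys is self-containedness (no appeal to the time-differentiated elliptic lemma of \cite{Lannes13}), at the cost of the two points you honestly flag as delicate: the Sobolev-norm (not merely $L^{\infty}$) control of the trace $\omega_{\tilde{z}}\vert_{\tilde{z}=1+\beta\teta}$, which does follow from the $H^{s-1,1}$ bound on $\nabla^{\beta}_{X,z}\mathbf{w}$ used in Theorem \ref{theorem:phi_euler_app}, and the construction of a lift vanishing to second order at the bottom (e.g. proportional to $(1+z)^2$ in flattened coordinates) so that neither the co-normal condition nor the $\scrO(\beta^3)$ size of the source is degraded; both can be completed within the paper's ``$s$ large enough'' conventions.
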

\begin{proof}

Let $\left(\teta,\nabla\psi\right)\in  H^s(\R^2)\times H^{s}(\R^2)$ with $s$ large enough. As a result of Remark \ref{remark:zakharov} we have, 
$$\frac{ [ \mathcal{G}_{\beta}[\beta\teta]\psi + \beta^2\nabla\teta\cdot \nabla\psi ]^2 }{2 (1+ \beta^3 \vert\nabla\teta\vert^2)}\in H^{s-1/2}(\R^2)\ .$$
In fact, this quantity appears in the expression of $\psi_{\tilde{t}}$ as shown in system \eqref{eq:zakharov}:
$$\psi_{\tilde{t}} = -  \teta - \displaystyle\frac{\beta}{2} \vert\nabla\psi\vert^2 
         + \displaystyle\frac{ [ \mathcal{G}_{\beta}[\beta\teta]\psi + \beta^2\nabla\teta\cdot \nabla\psi ]^2 }{2 (1+ \beta^3 \vert\nabla\teta\vert^2)}\ .$$
Hence $\psi_{\tilde{t}}\in H^{s-1/2}(\R^2)$, and for $s$ large enough we have 
$H^{s-1/2}(\R^2)\hookrightarrow L^{\infty}(\R^2)$.\newline
We will use the notation $\omega=\tPhi^{app}-\tPhi$. Then $\omega$ satisfies the boundary value problem \eqref{eq:u_bvp}. In fact, if we denote $\phi=\tilde{\Phi}\circ\Sigma$ then $\phi$ satisfies:
\begin{align}\begin{cases}\label{eq:flat_phi}
\nabla_{X,z}^{\beta}\cdot P(\Sigma)\nabla_{X,z}^{\beta} \phi = 0 \textrm{ in } \R^2\times(-1,0)\\
\phi\vert_{\tilde{z}=0}=\psi\\
e_z \cdot P(\Sigma)\nabla_{X,z}^{\beta} \phi\vert_{\tilde{z}=-1}=0
\end{cases}
\end{align}
We look for an approximate solution to the above system of the form
\begin{align*}
\phi^{app}=\sum_{j=0}^N \beta^j\phi_j.
\end{align*}
By replacing $\phi^{app}$ in \eqref{eq:flat_phi} and canceling higher order terms in $\beta$ one obtains:
\begin{align}
\frac{1}{H}\partial^2_{\tilde{z}}\phi_0=0,\qquad \phi_0\vert_{\tilde{z}=0}=\psi,\qquad \frac{1}{H}\partial_{\tilde{z}}\phi_0=0,
\end{align}
and for all $1\leq j \leq n$,
\begin{align}\begin{cases}
\displaystyle\frac{1}{H}\partial^2_{\tilde{z}}\phi_j=-A(\nabla,\partial_{\tilde{z}})\phi_{j-1},\\
\displaystyle\phi_{j}\vert_{\tilde{z}=0}=\psi,\qquad \frac{1}{H}\partial_{\tilde{z}}\phi_j=0.
\end{cases}
\end{align}
The operator $A(\nabla,\dz)$ is given by
\begin{align*}
A(\nabla,\partial_{\tilde{z}})\bullet=\nabla\cdot(H\nabla\bullet)+\partial_{\tilde{z}}(\frac{\vert\nabla\sigma\vert^2}{H}\partial_{\tilde{z}}\bullet)-\nabla\cdot(\nabla\sigma\,\partial_{\tilde{z}}\bullet)-\partial_{\tilde{z}}(\nabla\sigma\cdot\nabla\bullet),
\end{align*}
where $H(t,X)=1+\beta\teta(t,X)$ denotes the water depth and $\sigma(t,X,\tilde{z})=\beta\teta(t,X) \tilde{z}+(1+\beta\teta(t,X))$.\newline
Then, $\phi_0=\psi$ and hence we can write:
\begin{align*}
\phi=\psi+h,
\end{align*}
where $h$ is function of $\tz$, $\teta$ and the derivatives of $\psi$. Thus, for $\mathbf{w}=\omega\circ\Sigma$ one has $\mathbf{w}_{\tilde{t}}\in H^{s-1/2}(\R^2)$. (One can choose initially $(\teta,\nabla\psi)\in H^{s+t_0}(\R^2)\times H^{s+t_0}(\R^2)$ with $t_0$ large enough.) Differentiating \eqref{eq:flatdomain} with respect to time, and for the sake of brevity (we refer to Lemma $5.4$ in \cite{Lannes13}) one readily obtains the following estimate: 
\begin{align*}
\vert\Lambda^s \nabla^{\beta}_{X,z} \mathbf{w_{\tilde{t}}}\vert_{L^2}\leq \beta^3 C(h^{-1}_{min},\beta_{max},\vert\teta\vert_{H^{s+t_0}},\vert\nabla\psi\vert_{H^{s+t_0}}).
\end{align*}
Using the embedding $H^{s-1,1}\hookrightarrow L^{\infty}((-1,0);H^{s-3/2})$ 
as in Theorem \ref{theorem:phi_euler_app}  one obtains the result.
\end{proof}

\begin{theorem}\label{theorem:t_pphi}
Let $(\eta^{Euler},\tilde{\Phi})$ be a regular solution  of the Euler system  such that $(\eta^{Euler},\nabla\psi)\in H^s(\R^2)\times H^{s}(\R^2)$ with $s$ large enough. Then, for $0<\tilde{t}<T/\beta$ we have,
$$\vert P^{series}_{\tilde{t}}-\tPhi_{\tilde{t}}\vert_{L^{\infty}(\Omega_t)}\leq C\beta^2\ ,$$
where $C$ is uniform with respect to the parameter $\beta$.
\end{theorem}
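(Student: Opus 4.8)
The plan is to estimate $P^{series}_{\tilde t}-\tPhi_{\tilde t}$ by inserting the intermediate quantity $\tPhi^{app}_{\tilde t}$ and invoking the estimates already established. First I would write the triangle inequality
\begin{align*}
\vert P^{series}_{\tilde t}-\tPhi_{\tilde t}\vert_{L^\infty(\Omega_t)}\leq \vert P^{series}_{\tilde t}-\tPhi^{app}_{\tilde t}\vert_{L^\infty(\Omega_t)}+\vert \tPhi^{app}_{\tilde t}-\tPhi_{\tilde t}\vert_{L^\infty(\Omega_t)},
\end{align*}
and note that the second summand is already controlled by $C\beta^2$ thanks to Theorem \ref{theorem:tphi_euler_app}. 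Thus the whole statement reduces to showing that the first summand is $\scrO(\beta^3)$, hence negligible at the order $\beta^2$.

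For the first summand, I would reuse the explicit identity produced in the proof of Theorem \ref{theorem:Pphi}, namely $P^{series}=\tPhi^{app}+h$ with $h=\beta^3\zeta$, where $\zeta$ is an explicit function of $\tz$, $\teta$ and the spatial derivatives of $\psi$. Differentiating this identity in time gives $P^{series}_{\tilde t}-\tPhi^{app}_{\tilde t}=\beta^3\zeta_{\tilde t}$, so it only remains to prove that $\zeta_{\tilde t}$ is bounded in $L^\infty(\Omega_t)$ uniformly in $\beta$ on the long time interval $0<\tilde t<T/\beta$.

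The crux of the argument---and the step I expect to be the main obstacle---is exactly this uniform control of $\zeta_{\tilde t}$. Since $\zeta$ depends on $\teta$ and on spatial derivatives of $\psi$, the chain rule produces terms carrying the factors $\teta_{\tilde t}$ and $\psi_{\tilde t}$ (and their spatial derivatives), which cannot be bounded naively. Instead I would substitute the evolution equations \eqref{eq:zakharov}. The delicate point is that $\teta_{\tilde t}=\tfrac1\beta\mathcal{G}_{\beta}[\beta\teta]\psi$ carries a prefactor $1/\beta$; one must use that the rescaled Dirichlet--Neumann operator $\tfrac1\beta\mathcal{G}_{\beta}[\beta\teta]$ is bounded uniformly in $\beta$ from $\dot H^{s+1}(\R^2)$ into $H^{s-1/2}(\R^2)$ (Remark \ref{remark:zakharov} together with the estimates of \cite{Lannes13}), so that $\teta_{\tilde t}$ is in fact $\scrO(1)$ rather than $\scrO(\beta^{-1})$. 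Likewise $\psi_{\tilde t}=-\teta-\tfrac{\beta}{2}\vert\nabla\psi\vert^2+\nu[\beta\teta]\psi\in H^{s-1/2}(\R^2)$ is $\scrO(1)$ by Remark \ref{remark:zakharov}. Feeding these bounds back, together with the hypothesis $(\eta^{Euler},\nabla\psi)\in H^s(\R^2)\times H^s(\R^2)$ for $s$ large enough (so that the relevant Sobolev spaces embed in $L^\infty$) and the fact that $\tz$ is bounded above by $1+\beta\teta$, shows that $\zeta_{\tilde t}$ is bounded in $L^\infty(\Omega_t)$ independently of $\beta$.

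Finally I would combine the pieces: the first summand equals $\beta^3\vert\zeta_{\tilde t}\vert_{L^\infty(\Omega_t)}\leq C\beta^3$, the second is $\leq C\beta^2$, and since $\beta<1$ their sum is bounded by $C\beta^2$, which is the claimed estimate.
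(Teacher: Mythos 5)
Your proposal is correct and follows essentially the same route as the paper's proof: both differentiate the identity $P^{series}=\tPhi^{app}+h$ with $h=\beta^3\zeta$ from Theorem \ref{theorem:Pphi}, control $h_{\tilde t}$ by substituting the Zakharov--Craig--Sulem evolution equations \eqref{eq:zakharov} together with the mapping properties of Remark \ref{remark:zakharov} (so that $\teta_{\tilde t}$ and $\psi_{\tilde t}$ lie in $H^{s-1/2}(\R^2)\hookrightarrow L^\infty(\R^2)$), and then conclude via the comparison with $\tPhi^{app}_{\tilde t}$ furnished by Theorem \ref{theorem:tphi_euler_app}. Your write-up is in fact slightly more explicit than the paper's on two points it leaves implicit: the final triangle inequality, and the need for the bound on $\tfrac1\beta\mathcal{G}_{\beta}[\beta\teta]$ to be uniform in $\beta$ despite the $1/\beta$ prefactor.
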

\begin{proof}

The following identity holds (we refer to the proof of Theorem \ref{theorem:Pphi} for clarification):
 $$P^{series}=\tilde{\Phi}^{app}+h \ ,$$
where $h=\mathcal{O}(\beta^3)$ is a function of $\tz$, $\teta$ and the derivatives of $\psi$.
Hence, we can write
$$P^{series}_{\tilde{t}}=\tilde{\Phi}^{app}_{\tilde{t}}+h_{\tilde{t}}\ ,$$
and we aim to show that $h_{\tilde{t}}\in H^{s-1/2}(\R^2)$ for $s$ large enough. In fact, from the second equation in system \eqref{eq:zakharov} we have
 \begin{align*}
 \psi_{\tilde{t}} =  -\teta - \frac{\beta}{2} \vert\nabla\psi\vert^2 +\frac{ [ \mathcal{G}_{\beta}[\beta\teta]\psi + \beta^2\nabla\teta\cdot \nabla\psi ]^2 }{2 (1+ \beta^3 \vert\nabla\teta\vert^2)}\ .  
 \end{align*}
 Then, as a consequence of Remark \ref{remark:zakharov}, $\psi_{\tilde{t}}\in H^{s-1/2}(\R^2)$.  Moreover, from the first equation in system \eqref{eq:zakharov} we have
 $$\teta_{\tilde{t}}=\frac{1}{\beta} \mathcal{G}_{\beta}[\beta\teta] \psi\,,$$
 which implies that $\teta_{\tilde{t}}\in H^{s-1/2}(\R^2)$.
Choosing $s$ large enough such that $H^{s-1/2}(\R^2)\hookrightarrow L^{\infty}(\R^2)$ and for $\tilde{z}$ bounded from above by $1+\beta\teta$, we can write:
 \begin{equation}
 \vert P_{\tilde{t}}^{series}-\tilde{\Phi}^{app}_{\tilde{t}}\vert_{L^{\infty}(\Omega_t)}\leq C\beta^2\ ,
 \end{equation}
 where $C$ is independent of $\beta$.
\end{proof}

The approximation of the pressure term $\tilde{P'}$ is given by the following corollary.
\begin{corollary}
Let $(\eta^{Euler},\tilde{\Phi})$ be a regular solution  of the Euler system  such that $(\eta^{Euler},\nabla\psi)\in H^s(\R^2)\times H^{s}(\R^2)$ with $s$ large enough. Then, for $0<\tilde{t}<T/\beta$ we have,
\begin{align}
\vert \tilde{P'}-\tilde{Q}\vert_{L^{\infty}(\Omega_t)}\leq C (\alpha\beta+\beta^2),
\end{align}
where $\tilde{Q}$ is an approximation of the pressure defined by:
\begin{align}
\tilde{Q}=-P^{series}_{\tilde{t}}-\frac{1}{2}\alpha \left((P^{series}_{\tilde{x}})^2+(P^{series}_{\tilde{y}})^2\right)
 -\frac{1}{2}\frac{\alpha}{\beta}(P^{series}_{\tilde{z}})^2\,,
\end{align}
and $C$ is a constant independent of $\beta$.
\end{corollary}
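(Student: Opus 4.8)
The plan is to exploit the fact that $\tilde{Q}$ is obtained from the exact Bernoulli expression for the scaled dynamic pressure,
$$\tilde{P'}=-\tPhi_{\tilde{t}}-\frac{1}{2}\alpha\left(\tPhi_{\tilde{x}}^2+\tPhi_{\tilde{y}}^2\right)-\frac{1}{2}\frac{\alpha}{\beta}\tPhi_{\tilde{z}}^2\ ,$$
by the single substitution $\tPhi\mapsto P^{series}$ in every term. Subtracting, the difference splits into three pieces that I would estimate one at a time:
$$\tilde{P'}-\tilde{Q}=-\left(\tPhi_{\tilde{t}}-P^{series}_{\tilde{t}}\right)-\frac{1}{2}\alpha\left[\left(\tPhi_{\tilde{x}}^2-(P^{series}_{\tilde{x}})^2\right)+\left(\tPhi_{\tilde{y}}^2-(P^{series}_{\tilde{y}})^2\right)\right]-\frac{1}{2}\frac{\alpha}{\beta}\left(\tPhi_{\tilde{z}}^2-(P^{series}_{\tilde{z}})^2\right)\ .$$

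For the linear (time-derivative) term I would invoke Theorem \ref{theorem:t_pphi} directly, giving $\vert\tPhi_{\tilde{t}}-P^{series}_{\tilde{t}}\vert_{L^\infty(\Omega_t)}\leq C\beta^2$. For each quadratic term I would factor $a^2-b^2=(a-b)(a+b)$: the factor $(a-b)$ is controlled by Corollary \ref{corollary:pphi_deriv}, which yields $\vert\tPhi_{\tilde{x}}-P^{series}_{\tilde{x}}\vert_{L^\infty(\Omega_t)}\leq C\beta^2$ and likewise for the $\tilde{y}$- and $\tilde{z}$-derivatives, while the factor $(a+b)$ must be bounded uniformly in $\beta$ in $L^\infty(\Omega_t)$. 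The latter follows from the explicit form of $P^{series}_{\tilde{x}}$, $P^{series}_{\tilde{y}}$, $P^{series}_{\tilde{z}}$ in \eqref{eq:height111}--\eqref{eq:height222}, together with the regularity hypothesis $(\eta^{Euler},\nabla\psi)\in H^s(\R^2)\times H^{s}(\R^2)$, the Sobolev embedding $H^s(\R^2)\hookrightarrow L^\infty(\R^2)$, and the bound $\tilde{z}\leq 1+\beta\teta$; writing $\tPhi_{*}=P^{series}_{*}+(\tPhi_{*}-P^{series}_{*})$ then shows the corresponding derivatives of $\tPhi$ are bounded as well. Hence each difference of squares is $\mathcal{O}(\beta^2)$ in $L^\infty(\Omega_t)$.

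Collecting the weights produces the stated bound: the time-derivative term contributes $C\beta^2$; the two horizontal quadratic terms, carrying the prefactor $\tfrac{1}{2}\alpha$, contribute $\mathcal{O}(\alpha\beta^2)$, which is dominated by $\alpha\beta$ since $\beta<1$; and the vertical quadratic term, carrying the prefactor $\tfrac{1}{2}\frac{\alpha}{\beta}$, contributes $\tfrac{1}{2}\frac{\alpha}{\beta}\cdot\mathcal{O}(\beta^2)=\mathcal{O}(\alpha\beta)$. Summing these yields $\vert\tilde{P'}-\tilde{Q}\vert_{L^\infty(\Omega_t)}\leq C(\alpha\beta+\beta^2)$ with $C$ independent of $\beta$. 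I expect the only delicate point to be the bookkeeping around the $\alpha/\beta$ weight on the vertical term: the whole estimate hinges on the vertical derivative difference being genuinely $\mathcal{O}(\beta^2)$, so that the $1/\beta$ loss still leaves $\mathcal{O}(\alpha\beta)$, which is precisely what Corollary \ref{corollary:pphi_deriv} supplies, and on verifying the uniform-in-$\beta$ $L^\infty$ boundedness of the sum factors so that no hidden growth in $\beta$ creeps in.
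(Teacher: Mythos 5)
Your proposal is correct and follows essentially the same route as the paper's own proof: the same three-way splitting of $\tilde{P'}-\tilde{Q}$, the factorization $a^2-b^2=(a-b)(a+b)$ with the differences controlled by Theorem \ref{theorem:t_pphi} and Corollary \ref{corollary:pphi_deriv} and the sums bounded uniformly in $L^{\infty}(\Omega_t)$ via Sobolev embedding. Your explicit bookkeeping of the $\alpha/\beta$ weight on the vertical term (so that $\tfrac{\alpha}{\beta}\cdot\beta^2=\alpha\beta$) is exactly the point the paper leaves implicit in its closing sentence.
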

\begin{proof}
In fact, the definition of the pressure $\tilde{P}'$ is given in section \ref{sec:derivation} by
\begin{align}
\tilde{P}'=-\tPhi_{\ttt}-\frac{1}{2} \alpha\left( \tPhi_{\tx}^2+\tPhi_{\tilde{y}}^2\right)
 -\frac{1}{2}\frac{\alpha}{\beta}\tPhi_{\tz}^2\ ,
\end{align}
and thus one has
\begin{align}
\tilde{P}'-\tilde{Q}=&-(\tPhi_{\ttt}-P^{series}_{\tilde{t}})-\frac{1}{2} \alpha \left(\tPhi_{\tx}-P^{series}_{\tilde{x}}\right)\left(\tPhi_{\tx}+P^{series}_{\tilde{x}}\right)-\frac{1}{2} \alpha\left(\tPhi_{\tilde{y}}-P^{series}_{\tilde{y}}\right)\left(\tPhi_{\tilde{y}}+P^{series}_{\tilde{y}}\right)\\
 &-\frac{1}{2}\frac{\alpha}{\beta}\left(\tPhi_{\tz}-P^{series}_{\tilde{z}}\right)\left(\tPhi_{\tz}+P^{series}_{\tilde{z}}\right).
\end{align}
Given that $\tilde{\Phi}$ is regular enough and choosing $s$ sufficiently large such that $H^s(\R^2)\hookrightarrow L^{\infty}(\R^2)$ implies that $\nabla_{X,z}P^{series}\in L^{\infty}(\Omega_t)$ and consequently 
\begin{align*}
\vert\nabla_{X,z}\tPhi+\nabla_{X,z}P^{series}\vert_{L^{\infty}(\Omega_t)}&\leq \vert\nabla_{X,z}\tPhi\vert_{L^{\infty}(\Omega_t)}+\vert\nabla_{X,z}P^{series}\vert_{L^{\infty}(\Omega_t)}\\
&\leq k\ ,
\end{align*}
for some constant $k$. Hence using the previous estimates, one can deduce the result.
\end{proof}

\subsection{Mass balance}\label{sec:mass}

In this section, we establish the mass conservation properties of the general family of the two-dimensional 
Boussinesq systems (\ref{eq:gsystem1})--(\ref{eq:gsystem3}), (\ref{eq:paramgs}). The incompressibility of the fluid can be expressed in the form
$$ \rho _t+\frac{\partial }{\partial x}\left ( \rho u \right )+\frac{\partial }{\partial y}\left ( \rho v \right )
 +\frac{\partial }{\partial z}\left ( \rho w \right )=0\ ,$$
 which after integration and using Leibniz rule yields
$$
\begin{aligned}
\frac{\partial }{\partial t}\int_{-h_0}^{\eta }\rho \,dz-\rho \eta _t
&+\frac{\partial }{\partial x}\int_{-h_0}^{\eta }\rho u \,dz-\rho u |_{z=\eta } \eta _x \\
&+\frac{\partial }{\partial y}\int_{-h_0}^{\eta }\rho v \,dz-\rho v |_{z=\eta } \eta _y
+\int_{-h_0}^{\eta }\frac{\partial }{\partial z}(\rho w) \,dz=0\ .
\end{aligned}
$$
However, without loss of generality we assume that $\rho =1$ which simplifies the previous relation. Since the vertical velocity at the bottom $w |_{(-h_0)}=0$ and the kinematic boundary condition at the free surface of the water is $\eta_t+\Phi_x \eta_x+\Phi_y \eta_y-\Phi_z=0  \ \ \text{ on }\ \  z=\eta(x,y,t)\ ,$ 
we have
\begin{equation}
 \frac{\partial }{\partial t}\int_{-h_0}^{\eta } \,dz+\frac{\partial }{\partial x}\int_{-h_0}^{\eta } u \,dz 
 +\frac{\partial }{\partial y}\int_{-h_0}^{\eta } v \,dz=0\ ,
\end{equation}
or equivalently in non-dimensional variables
\begin{equation}\label{mass:phi}
 \frac{\partial }{\partial \tilde{t}}\left ( 1+\alpha \tilde{\eta } \right )
 +\frac{\partial }{\partial \tilde{x}}\int_{\tilde{z}=0}^{1+\alpha \tilde{\eta }}\alpha \tilde{\Phi}_{\tilde{x}}\,d\tilde{z}
 +\frac{\partial }{\partial \tilde{y}}\int_{\tilde{z}=0}^{1+\alpha \tilde{\eta }}\alpha\tilde{\Phi}_{\tilde{y}}  \,d\tilde{z}=0\ .
\end{equation}
Substituting the expressions for $\tilde{\Phi}_{\tilde{x}}$ and $\tilde{\Phi}_{\tilde{y}}$  in terms 
of $\tilde{U}$ and $\tilde{V}$ gives
\begin{equation} \label{eq:mb}
\begin{aligned}
 \frac{\partial }{\partial \tilde{t}}\left ({1+\alpha \tilde{\eta }} \right )
&+\frac{\partial }{\partial \tilde{x}}\left [ \tilde{U}(\alpha +\alpha ^2\tilde{\eta })
+\frac{\alpha \beta}{2} \left ( \theta ^2
 -\frac{1}{3} \right )\Delta \tilde{U} \right ]\\
&+\frac{\partial }{\partial \tilde{y}}\left [ \tilde{V}(\alpha +\alpha ^2\tilde{\eta })
+\frac{\alpha \beta}{2} \left ( \theta ^2
 -\frac{1}{3} \right )\Delta \tilde{V} \right ]
=\mathcal{O}(\alpha \beta^2 ,\alpha ^2 \beta )\ .
\end{aligned}
\end{equation}
Finally, we obtain the differential balance equation 
\begin{equation}
 \tilde{\eta }_{\tilde{t }}+\tilde{U}_{\tilde{x}}+\tilde{V}_{\tilde{y}}
+\alpha \left [  (\tilde{U}\tilde{\eta })_{\tilde{x}}+(\tilde{V}\tilde{\eta })_{\tilde{y}}\right ]
+\frac{\beta }{2}\left ( \theta ^2-\frac{1}{3} \right )(\Delta \tilde{U}_{\tilde{x}}
+\Delta \tilde{V}_{\tilde{y}})=\mathcal{O}(\alpha \beta,  \beta^2 )\ .
\end{equation}
From (\ref{eq:mb}) the non-dimensional mass density and the non-dimensional mass fluxes are 
\begin{align*}
 \tilde{M}&={1+\alpha \tilde{\eta }}\ ,\\
 \tilde{q}_{m_x}&= \tilde{U}(\alpha +\alpha ^2\tilde{\eta })+\frac{\alpha \beta}{2} \left ( \theta ^2
 -\frac{1}{3} \right )\Delta \tilde{U}\ ,\\
 \tilde{q}_{m_y}&= \tilde{V}(\alpha +\alpha ^2\tilde{\eta })+\frac{\alpha \beta}{2} \left ( \theta ^2
 -\frac{1}{3} \right )\Delta \tilde{V}\ .
\end{align*}
Thus, the mass balance is
\begin{equation}\label{eq:massD}
\frac{\partial }{\partial \tilde{t}}\tilde{M}+\frac{\partial }{\partial \tilde{x}}\tilde{q}_{m_x}
+\frac{\partial }{\partial \tilde{y}}\tilde{q}_{m_y}=\mathcal{O}(\alpha \beta^2,  \beta^3 )\ .
\end{equation}
 In dimensional variables the quantities in mass balance equation (\ref{eq:massD}) are the following:
 \begin{align*}
 {M}&={h_0+ {\eta }}\ ,\\
 {q}_{m_x}&= {U}(h_0+{\eta })+h_0^3 \frac{1}{2}\left ( \theta ^2-\frac{1}{3} \right )\Delta U\ ,\\
{q}_{m_y}&= {V}(h_0+{\eta })+h_0^3 \frac{1}{2}\left ( \theta ^2-\frac{1}{3} \right ) \Delta V \ .
\end{align*}

\subsection{Momentum balance}\label{sec:momentum}

In this section we find an approximate expression for momentum density and flux. for obtaining momentum balance,
we first consider the Euler equation (\ref{eq:ee}) and the incompressibility condition (\ref{eq:if})
\begin{align}
 \vec{u}_t+(\vec{u}\cdot\nabla )\vec{u}+\nabla P& =\vec{g}\ , \label{eq:ee}\\
\nabla \cdot\vec{u}& =0\ , \label{eq:if}
\end{align}
 where $\vec{u}=(u,v,w)$ represents the velocity field, $P(x,y,z,t)$ represents the pressure and $\vec{g}=(0,0,-g)$ represents  gravitational  vector. Writing the Euler equations in terms of the velocity potential $\vec{u}=\nabla\Phi$ we obtain the equations
\begin{align*}
& \Phi _{xt}+{(\Phi _{x}^2)}_x+{(\Phi _{x}\Phi _{y})}_y+{(\Phi _{x}\Phi _{z})}_z+P_x=0\ ,\\
& \Phi _{yt}+{(\Phi _{y}^2)}_y+{(\Phi _{y}\Phi _{x})}_x+{(\Phi _{y}\Phi _{z})}_z+P_y=0\ .
\end{align*}
Integrating over a fluid column and using the kinematic boundary condition (\ref{eq:freesurface}) yields
\begin{eqnarray*}
\frac{\partial }{\partial {t}}\int_{-h_0}^{\eta }\Phi _x \,dz
 +\frac{\partial }{\partial {x}}\int_{-h_0}^{{\eta }}\left ( \Phi _x^2+P \right )\,dz
 +\frac{\partial }{\partial {y}}\int_{-h_0}^{{\eta }}\Phi _x \Phi _y\,dz&=&0\ ,\\
\frac{\partial }{\partial {t}}\int_{-h_0}^{\eta }\Phi _y \,dz
 +\frac{\partial }{\partial {y}}\int_{-h_0}^{{\eta }}\left ( \Phi _y^2+P \right )\,dz
 +\frac{\partial }{\partial {x}}\int_{-h_0}^{{\eta }}\Phi _y \Phi _x\,dz&=&0\ .
\end{eqnarray*}
Expressing the above relations in non-dimensional variables leads to the equations
\begin{eqnarray*}
\alpha \frac{\partial }{\partial \tilde{t}}\int_{\tilde{z}=0}^{1+\alpha \tilde{\eta }}\tilde{\Phi}_{\tilde{x}}\,d\tilde{z}
+\frac{\partial }{\partial \tilde{x}}\int_{\tilde{z}=0}^{1+\alpha \tilde{\eta }} \left \{ \alpha ^2
(\tilde{\Phi}_{\tilde{x}}^2) 
+\alpha \tilde{P}'-(\tilde{z}-1)\right \}\,d\tilde{z}+
\alpha ^2 \frac{\partial }{\partial \tilde{y}} \int_{\tilde{z}=0}^{1+\alpha \tilde{\eta }}\tilde{\Phi}_{\tilde{x}}\tilde{\Phi}_{\tilde{y}}\,d\tilde{z}&=&0\ ,\\
\alpha \frac{\partial }{\partial \tilde{t}}\int_{\tilde{z}=0}^{1+\alpha \tilde{\eta }}\tilde{\Phi}_{\tilde{y}}\,d\tilde{z}
 +\frac{\partial }{\partial \tilde{y}}\int_{\tilde{z}=0}^{1+\alpha \tilde{\eta }} \left \{ \alpha ^2(\tilde{\Phi}_{\tilde{y}}^2
) +\alpha \tilde{P}'-(\tilde{z}-1)\right \}\,d\tilde{z}
+\alpha ^2 \frac{\partial }{\partial \tilde{x}} \int_{\tilde{z}=0}^{1+\alpha \tilde{\eta }}
\tilde{\Phi}_{\tilde{x}}\tilde{\Phi}_{\tilde{y}}\,d\tilde{z}&=&0\ .
\end{eqnarray*}
Substituting non-dimensional velocity potentials  $\tilde{\Phi}_{\tilde{x}}$ and $\tilde{\Phi}_{\tilde{y}}$ 
and $\tilde{P}'$ in terms of $\tilde{U}$ and $\tilde{V}$ gives the momentum balance equations
\begin{equation}
\begin{aligned}
\frac{\partial }{\partial\tilde{t } }\left \{   (1+\alpha \tilde{\eta})  \tilde{U}
+\frac{\beta }{2}\left ( \theta ^2-\frac{1}{3}\right ) \Delta \tilde{U} \right \}+
\frac{\partial }{\partial\tilde{x } }\left \{ \tilde{\eta}+\alpha \tilde{U}^2 
+\frac{\alpha }{2} \tilde{\eta }^2-\frac{1}{3}\beta (\tilde{U}_{\tilde{x }\tilde{t }}
 +\tilde{V}_{\tilde{y }\tilde{t }})+\frac{1}{2}\right \} \\
 +\frac{\partial }{\partial\tilde{y } }(\alpha \tilde{U}\tilde{V})=\mathcal{O}(\alpha \beta,  \beta^2 )\ , \label{eq:momentum1D}
 \end{aligned}
\end{equation}
 \begin{equation}\label{eq:momentum2D}
 \begin{aligned}
\frac{\partial }{\partial\tilde{t } }\left \{   (1+\alpha \tilde{\eta})  \tilde{V}
+\frac{\beta }{2}\left ( \theta ^2-\frac{1}{3}\right ) \Delta \tilde{V} \right \}+
\frac{\partial }{\partial\tilde{y } }\left \{ \tilde{\eta}+\alpha \tilde{V}^2 
+\frac{\alpha }{2} \tilde{\eta }^2-\frac{1}{3}\beta (\tilde{U}_{\tilde{x }\tilde{t }}
 +\tilde{V}_{\tilde{y }\tilde{t }})+\frac{1}{2}\right \}  \\
 +\frac{\partial }{\partial\tilde{x } }(\alpha \tilde{U}\tilde{V})=\mathcal{O}(\alpha \beta,  \beta^2 )\ . 
\end{aligned}
\end{equation}
 If the terms of order $\mathcal{O}(\alpha \beta,  \beta^2)$  are neglected, then the momentum balance equations written in dimensional variables take the following form
$$
\begin{aligned}
 & \frac{\partial }{\partial {t } }\left \{   (h_0+{\eta})  {U}
+\frac{1 }{2}\left ( \theta ^2-\frac{1}{3} \right )\Delta U\right \}+\frac{\partial }{\partial{x } }\left \{h_0{U}^2 
+\frac{g}{2} \left ( {h_0+\eta }\right )^2 -\frac{h_0^3}{3} ({U}_{{x }{t }}
 +{V}_{{y }{t }})\right \}
 +\frac{\partial }{\partial{y } }(h_0{U}{V})=0\ ,\\
 & \frac{\partial }{\partial {t } }\left \{   (h_0+{\eta})  {V}
+\frac{1 }{2}\left ( \theta ^2-\frac{1}{3} \right )\Delta V \right \}+\frac{\partial }{\partial{y} }\left \{h_0{V}^2 
+\frac{g}{2} \left ( {h_0+\eta }\right )^2 -\frac{h_0^3}{3} ({U}_{{x }{t }}
 +{V}_{{y }{t }})\right \}
 +\frac{\partial }{\partial{x } }(h_0{U}{V})=0\ .\\ 
\end{aligned}
$$

\subsection{Energy balance}\label{sec:energy}

The exact energy balance equation reads 
\begin{equation}
 \frac{\partial }{\partial t }\left \{ \frac{1}{2} \left | \nabla \Phi ^2 \right |+gz\right \}+\nabla \cdot \left \{ \left (  \frac{1}{2} \left | \nabla \Phi ^2 \right |+gz+P\right )\nabla \Phi  \right \}=0\ .
\end{equation}
Integrating over the a water column yields
$$
\begin{aligned}
 \frac{\partial }{\partial t} \left \{ \int_{-h_0}^{\eta }\frac{1}{2}\left | \nabla \Phi  \right | ^2 \,dz +\int_{0}^{\eta }gz \,dz\right \}&+\frac{\partial }{\partial {x}} \left \{\int_{-h_0}^{\eta }
\left (\frac{1}{2}\left | \nabla \Phi  \right | ^2+gz+P  \right ) \Phi _x \right \} \,dz\\
 & +\frac{\partial }{\partial {y}} \left \{\int_{-h_0}^{\eta }
 \left (\frac{1}{2}\left | \nabla \Phi  \right | ^2+gz+P  \right ) \Phi _y \right \} \,dz =0\ .
\end{aligned}
$$
Using non-dimensional variables the last equation reads
$$
\begin{aligned}
\frac{\partial }{\partial \tilde{t}} & \left \{ \int_{\tilde{z}=0}^{1+\alpha \tilde{\eta }}\frac{\alpha ^2}{2}
\left ( \tilde{\Phi}_{\tilde{x}}^2+\tilde{\Phi}_{\tilde{y}}^2+\frac{1}{\beta } \tilde{\Phi}_{\tilde{z}}^2\right )\,d\tilde{z}
+\int_{\tilde{z}=1}^{1+\alpha \tilde{\eta }}\left ( \tilde{z}-1 \right ) \,d\tilde{z}\right \}\\ 
&+\frac{\partial }{\partial \tilde{x}}\int_{\tilde{z}=0}^{1+\alpha \tilde{\eta }} 
\left \{ \frac{\alpha ^3}{2}\left ( \tilde{\Phi}_{\tilde{x}}^2+\tilde{\Phi}_{\tilde{y}}^2
+\frac{1}{\beta } \tilde{\Phi}_{\tilde{z}}^2 \right )
+\alpha \left ( \tilde{z}-1 \right )+\alpha ^2 \tilde{P}'
+\alpha \left (1- \tilde{z} \right )\right \}\tilde{\Phi}_{\tilde{x}}\,d\tilde{z}\\
&+\frac{\partial }{\partial \tilde{y}}\int_{\tilde{z}=0}^{1+\alpha \tilde{\eta }} 
\left \{ \frac{\alpha ^3}{2}\left ( \tilde{\Phi}_{\tilde{x}}^2+\tilde{\Phi}_{\tilde{y}}^2
+\frac{1}{\beta } \tilde{\Phi}_{\tilde{z}}^2 \right )+\alpha \left ( \tilde{z}-1 \right )+\alpha ^2 \tilde{P}'
+\alpha \left (1- \tilde{z} \right )\right \}\tilde{\Phi}_{\tilde{y}}\,d\tilde{z}=0\ .
\end{aligned}
$$
Substituting the expressions for $\tilde{\Phi}_{\tilde{x}}$ and $\tilde{\Phi}_{\tilde{y}}$  in terms of $\tilde{U}$ and $\tilde{V}$ and (\ref{eq:dpresure}) leads to the energy balance equation
\begin{align*}
\frac{\partial }{\partial \tilde{t}}& \left [ \frac{1}{2}(\tilde{U}^2
+\tilde{V}^2+\tilde{\eta}^2) +\frac{\beta }{2}
\left ( \theta ^2-\frac{1}{3} \right )(\tilde{U }\Delta\tilde{U }+\tilde{V}\Delta\tilde{V})
+\frac{\beta }{6}(\tilde{U }_{\tilde{x }}+\tilde{V}_{\tilde{y}})^2
+\frac{\alpha }{2}\tilde{\eta }(\tilde{U }^2+{\tilde{V}}^2)\right ]\\
&+\frac{\partial }{\partial \tilde{x}}\left [\frac{\alpha }{2} (\tilde{U}^3+\tilde{V}^2\tilde{U})
+\alpha  \tilde{\eta}^2 \tilde{U}+ \tilde{\eta} \tilde{U}+
\frac{\beta }{2}\left ( \theta ^2-\frac{1}{3}\right ) \tilde{\eta} \Delta \tilde{U}-
\frac{\beta }{3}\tilde{U}(\tilde{U}_{\tilde{x}\tilde{t}}+\tilde{V}_{\tilde{y}\tilde{t}})\right ]\\
&+\frac{\partial }{\partial \tilde{y}}\left [\frac{\alpha }{2} (\tilde{V}^3+\tilde{U}^2\tilde{V})
+\alpha  \tilde{\eta}^2 \tilde{V}+ \tilde{\eta} \tilde{V}+
\frac{\beta }{2}\left ( \theta ^2-\frac{1}{3}\right ) \tilde{\eta} \Delta \tilde{V}-
\frac{\beta }{3}\tilde{V}(\tilde{U}_{\tilde{x}\tilde{t}}+\tilde{V}_{\tilde{y}\tilde{t}})\right ]
=\mathcal{O}(\alpha \beta,  \beta^2 )\ .
\end{align*}
Hence, the general form of the energy balance equation is
\begin{eqnarray} \label{eq:energyD}
 \frac{\partial }{\partial \tilde{t}}\tilde{E}+\frac{\partial }{\partial \tilde{x}}\tilde{q}_{E_x}
+\frac{\partial }{\partial \tilde{y}}\tilde{q}_{E_y}=\mathcal{O}(\alpha \beta,  \beta^2 )\ . 
\end{eqnarray}
The dimensional form of the quantities are
\begin{eqnarray} \label{eq:en}
 E=\frac{1}{2} h_0\left (  U^2+V^2\right ) 
 +\frac{1}{2} h_0^3\left ( \theta ^2-\frac{1}{3} \right )\left ( U\Delta U+V\Delta V \right )\\ \nonumber
 +\frac{1}{2} \left(  U^2+V^2\right )\eta 
 +\frac{1 }{6} h_0^3 \left ( U_x+V_y \right )^2+\frac{1 }{2}g\eta ^2\ ,
\end{eqnarray}
and
\begin{eqnarray} \label{eq:enf}
q_{E_x}= \frac{1}{2} h_0 \left ( U^3+UV^2 \right ) +gh_0{\eta U} + \frac{1 }{2}gh_0^3\left ( \theta ^2-\frac{1}{3} \right )
\eta  \Delta U-\frac{1}{3} h_0^3U\left ( U_{xt}+V_{yt} \right )  +gU\eta ^2\ ,  
\end{eqnarray}
\begin{eqnarray} \label{eq:enf1}
q_{E_y}= \frac{1}{2} h_0 \left ( V^3+U^2 V\right ) +gh_0{\eta V} + \frac{1 }{2}gh_0^3\left ( \theta ^2-\frac{1}{3} \right )
\eta  \Delta V-\frac{1}{3} h_0^3V\left ( U_{xt}+V_{yt} \right )  +gV\eta ^2\ .  
\end{eqnarray}

\subsection{On the role of the rigorous approach }

Here we justify the terms $\mathcal{O}(\alpha \beta,  \beta^2 ) $ and  $\mathcal{O}(\alpha \beta^2,  \beta^3 )$  in the mechanical balance laws by justifying the formal use of Taylor's formula in \eqref{eq:height1} and \eqref{eq:height2}. We first verify that the expressions \eqref{eq:tsystem1}, \eqref{eq:tsystem2}, and \eqref{eq:tsystem3} hold for an explicit  definition of the variable $f$ given by \eqref{eq:explicit_f}.\newline
We focus on finding a common ground between the expressions used in Section \ref{sec:derivation} with new similar expressions satisfied by $P^{series}_{\tilde{x}}$ and $P^{series}_{\tilde{y}}$.\newline
In fact, from equations \eqref{eq:height111} and \eqref{eq:height222} one can write up to order $\mathcal{O}(\beta^3)$ :
\begin{align}
& P^{series}_{\tilde{x}}\vert_{\tilde{z}=\theta}=\tilde{U}=
\tilde{u}
-\frac{\theta^2}{2}\beta \Delta  \tilde{u}+\frac{\theta^4}{24}\beta^2 \Delta^2
 \tilde{u}\ , \label{eq:height333}\\
  & P^{series}_{\tilde{y}}\vert_{\tilde{z}=\theta}=\tilde{V}=
\tilde{v}
-\frac{\theta^2}{2}\beta \Delta \tilde{v}+\frac{\theta^4}{24}\beta^2 \Delta^2  \tilde{v}\ . \label{eq:height444}
\end{align}
One can verify that $f$ satisfies \eqref{eq:sbfc} and thus \eqref{eq:tsystem1}, \eqref{eq:tsystem2}, and \eqref{eq:tsystem3} hold in terms of the variables $\tilde{U}$ and $\tilde{V}$ which are now well justified in terms of $f$ and its derivatives. Therefore, giving a rigorous justification to the formally applied Taylor's formula used in \eqref{eq:height1} and \eqref{eq:height2}. This would not have been possible without using the fact that the approximations in Section \ref{sec:main} hold, specifically Corollary \ref{corollary:pphi_deriv} and Theorem \ref{theorem:t_pphi} that bring together the results and ensure that the use of $\nabla P^{series}$, $P_z^{series}$ and $P_t^{series}$ to approximate $\nabla\tilde{\Phi}$, $\tilde{\Phi}_z$ and $\tilde{\Phi}_t$ 
is valid up to order $\mathcal{O}(\beta^2)$.

\begin{theorem}
Let $(\eta^{Euler},\tilde{\Phi})$ be a regular solution  of the Euler system  such that $(\eta^{Euler},\nabla\psi)\in H^s(\R^2)\times H^{s}(\R^2)$ with $s$ large enough. Then, there exists constants $C_1,\,C_2,\,C_3,\,C_4$ independent of $\beta$ such that the following approximate mechanical balance laws are satisfied:
\begin{align*}
&\Big| \frac{\partial }{\partial \tilde{t}}(1+\alpha \tilde{\eta })+\frac{\partial }{\partial \tilde{x}}\left \{  \tilde{U}(\alpha +\alpha ^2\tilde{\eta })+\frac{\alpha \beta}{2} \left ( \theta ^2
 -\frac{1}{3} \right )\Delta \tilde{U}\right \}  \\\
&  +\frac{\partial }{\partial \tilde{y}}\left \{  \tilde{V}(\alpha +\alpha ^2\tilde{\eta })+\frac{\alpha \beta}{2} \left ( \theta ^2
 -\frac{1}{3} \right )\Delta \tilde{V}\right \}  \Big| _{L^{\infty}(\R^2)}\leq C_1 (\alpha \beta^2+ \alpha^2\beta)\ ,\\
&\Big|
\frac{\partial }{\partial\tilde{t } }\left \{   (1+\alpha \tilde{\eta})  \tilde{U}
+\frac{\beta }{2}\left ( \theta ^2-\frac{1}{3}\right ) \Delta \tilde{U} \right \}+
\frac{\partial }{\partial\tilde{x } }\left \{ \tilde{\eta}+\alpha \tilde{U}^2 
+\frac{\alpha }{2} \tilde{\eta }^2-\frac{1}{3}\beta (\tilde{U}_{\tilde{x }\tilde{t }}
 +\tilde{V}_{\tilde{y }\tilde{t }})+\frac{1}{2}\right \} \\
 &+\frac{\partial }{\partial\tilde{y } }(\alpha \tilde{U}\tilde{V})\Big| _{L^{\infty}(\R^2)}\leq C_2(\alpha \beta+  \beta^2 )\ ,\\ 
&\Big|
\frac{\partial }{\partial\tilde{t } }\left \{   (1+\alpha \tilde{\eta})  \tilde{V}
+\frac{\beta }{2}\left ( \theta ^2-\frac{1}{3}\right ) \Delta \tilde{V} \right \}+
\frac{\partial }{\partial\tilde{y } }\left \{ \tilde{\eta}+\alpha \tilde{V}^2 
+\frac{\alpha }{2} \tilde{\eta }^2-\frac{1}{3}\beta (\tilde{U}_{\tilde{x }\tilde{t }}
 +\tilde{V}_{\tilde{y }\tilde{t }})+\frac{1}{2}\right \}  \\
& +\frac{\partial }{\partial\tilde{x } }(\alpha \tilde{U}\tilde{V})\Big| _{L^{\infty}(\R^2)}\leq C_3(\alpha \beta+  \beta^2 )\ ,\\
&\Big| \frac{\partial }{\partial \tilde{t}}\tilde{E}+\frac{\partial }{\partial \tilde{x}}\tilde{q}_{E_x}
+\frac{\partial }{\partial \tilde{y}}\tilde{q}_{E_y}\Big| _{L^{\infty}(\R^2)}\leq C_4 (\alpha \beta+  \beta^2 )\ ,
\end{align*}
where $\tilde{E}$, $\tilde{q}_{E_x}$, and $\tilde{q}_{E_y}$ are given by:
\begin{align*}
&\tilde{E}= \frac{1}{2}(\tilde{U}^2
+\tilde{V}^2+\tilde{\eta}^2) +\frac{\beta }{2}
\left ( \theta ^2-\frac{1}{3} \right )(\tilde{U }\Delta\tilde{U }+\tilde{V}\Delta\tilde{V})
+\frac{\beta }{6}(\tilde{U }_{\tilde{x }}+\tilde{V}_{\tilde{y}})^2
+\frac{\alpha }{2}\tilde{\eta }(\tilde{U }^2+{\tilde{V}}^2),\\
&\tilde{q}_{E_x}=\frac{\alpha }{2} (\tilde{U}^3+\tilde{V}^2\tilde{U})
+\alpha  \tilde{\eta}^2 \tilde{U}+ \tilde{\eta} \tilde{U}+
\frac{\beta }{2}\left ( \theta ^2-\frac{1}{3}\right ) \tilde{\eta} \Delta \tilde{U}-
\frac{\beta }{3}\tilde{U}(\tilde{U}_{\tilde{x}\tilde{t}}+\tilde{V}_{\tilde{y}\tilde{t}}),\\
&\tilde{q}_{E_y}=\frac{\alpha }{2} (\tilde{V}^3+\tilde{U}^2\tilde{V})
+\alpha  \tilde{\eta}^2 \tilde{V}+ \tilde{\eta} \tilde{V}+
\frac{\beta }{2}\left ( \theta ^2-\frac{1}{3}\right ) \tilde{\eta} \Delta \tilde{V}-
\frac{\beta }{3}\tilde{V}(\tilde{U}_{\tilde{x}\tilde{t}}+\tilde{V}_{\tilde{y}\tilde{t}})\ .
\end{align*}
\end{theorem}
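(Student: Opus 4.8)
The strategy is to exploit the fact that the mechanical balance laws recorded in Sections~\ref{sec:mass}--\ref{sec:energy} are, prior to any truncation, \emph{exact} identities: integrating incompressibility, the Euler momentum equations and the energy equation over a fluid column and invoking the kinematic and dynamic boundary conditions yields the left-hand sides of \eqref{mass:phi}, \eqref{eq:momentum1D}--\eqref{eq:momentum2D} and the energy identity with the genuine Euler quantities $\tPhi_{\tx},\tPhi_{\tilde{y}},\tPhi_{\tz},\tPhi_{\ttt}$ and $\tP'$ in place, and these vanish identically for a regular solution. The approximate densities and fluxes $\tM,\tq_{m_x},\tE,\tq_{E_x},\dots$ are obtained from the exact ones by two substitutions: replacing $\nabla_{X,z}\tPhi$ and $\tPhi_{\ttt}$ by $\nabla_{X,z}P^{series}$ and $P^{series}_{\ttt}$, and replacing $\tP'$ by $\tilde{Q}$, after which the vertical integral is evaluated with the Taylor relation between the bottom velocity $\tilde{u}=f_{\tx}$ and the velocity $\tilde{U}=P^{series}_{\tx}\vert_{\tz=\theta}$ recorded in \eqref{eq:height333}--\eqref{eq:height444}. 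Consequently the residual of each approximate balance law equals the exact (vanishing) expression minus the approximate one, i.e.\ a sum of vertically integrated differences of the form $\tPhi_{\bullet}-P^{series}_{\bullet}$ and $\tP'-\tilde{Q}$, plus the purely algebraic remainder generated by the Taylor substitution and the $\tz$-integration.

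First I would fix the error budget supplied by Section~\ref{sec:main}. Corollary~\ref{corollary:pphi_deriv} gives $\vert\tPhi_{\tx}-P^{series}_{\tx}\vert_{L^{\infty}(\Omega_t)}$, $\vert\tPhi_{\tilde{y}}-P^{series}_{\tilde{y}}\vert_{L^{\infty}(\Omega_t)}$, $\vert\tPhi_{\tz}-P^{series}_{\tz}\vert_{L^{\infty}(\Omega_t)}\le C\beta^2$; Theorem~\ref{theorem:t_pphi} gives $\vert\tPhi_{\ttt}-P^{series}_{\ttt}\vert_{L^{\infty}(\Omega_t)}\le C\beta^2$; and the corollary of Section~\ref{sec:pressure} gives $\vert\tP'-\tilde{Q}\vert_{L^{\infty}(\Omega_t)}\le C(\alpha\beta+\beta^2)$. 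Since $\tz$ ranges over an interval of length $1+\alpha\teta=\mathcal{O}(1)$ bounded below by $h_{min}$, every vertical integral of an $L^{\infty}(\Omega_t)$-bounded integrand inherits the same order, and after integration all $\tz$-dependence disappears so that the residuals are genuinely functions of $(X,\ttt)$ estimated in $L^{\infty}(\R^2)$.

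Next I would treat the three laws in turn, tracking prefactors. For mass the density $\tM=1+\alpha\teta$ is exact, so the residual is $\partial_{\tx}\bigl[\tq_{m_x}-\alpha\int_{0}^{1+\alpha\teta}\tPhi_{\tx}\,d\tz\bigr]+\partial_{\tilde{y}}[\cdots]$; writing $\tq_{m_x}-\alpha\int\tPhi_{\tx}\,d\tz=\alpha\int(P^{series}_{\tx}-\tPhi_{\tx})\,d\tz+\bigl[\tq_{m_x}-\alpha\int P^{series}_{\tx}\,d\tz\bigr]$ exhibits a rigorous part of order $\alpha\cdot\beta^2$ and a formal algebraic part of order $\alpha(\alpha\beta+\beta^2)=\alpha^2\beta+\alpha\beta^2$, matching the claimed $C_1(\alpha\beta^2+\alpha^2\beta)$. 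For momentum the exact balance divided by $\alpha$ produces \eqref{eq:momentum1D}--\eqref{eq:momentum2D}, and the residual collects $\partial_{\ttt}\int(\tPhi_{\tx}-P^{series}_{\tx})\,d\tz=\mathcal{O}(\beta^2)$, the quadratic difference $\alpha\int(\tPhi_{\tx}^2-(P^{series}_{\tx})^2)\,d\tz=\mathcal{O}(\alpha\beta^2)$ handled by the factorization $a^2-b^2=(a-b)(a+b)$ together with $\vert\nabla_{X,z}\tPhi+\nabla_{X,z}P^{series}\vert_{L^{\infty}}\le k$, and the dominant pressure contribution $\int(\tP'-\tilde{Q})\,d\tz=\mathcal{O}(\alpha\beta+\beta^2)$, giving $C_2,C_3(\alpha\beta+\beta^2)$. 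For energy the exact balance is divided by $\alpha^2$; the only new point is the vertical kinetic term carrying the weight $\tfrac1\beta$: since $P^{series}_{\tz}$, and hence $\tPhi_{\tz}$, are $\mathcal{O}(\beta)$ while $\tPhi_{\tz}-P^{series}_{\tz}=\mathcal{O}(\beta^2)$, the same factorization yields $\tfrac1\beta\bigl[(\tPhi_{\tz})^2-(P^{series}_{\tz})^2\bigr]=\tfrac1\beta\mathcal{O}(\beta^3)=\mathcal{O}(\beta^2)$, while the pressure--velocity flux difference $\tP'\tPhi_{\tx}-\tilde{Q}P^{series}_{\tx}=(\tP'-\tilde{Q})\tPhi_{\tx}+\tilde{Q}(\tPhi_{\tx}-P^{series}_{\tx})=\mathcal{O}(\alpha\beta+\beta^2)$ again supplies the dominant order $C_4(\alpha\beta+\beta^2)$.

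The main obstacle I anticipate is that the balance laws are measured in $L^{\infty}(\R^2)$ \emph{after} differentiation in $\tx,\tilde{y},\ttt$, so bounding the potential differences themselves is not enough: one needs the same orders for their first derivatives and one must control the Leibniz boundary terms generated at the free surface $\tz=1+\alpha\teta$ when differentiating the $\tz$-integrals. Both are resolved by the device used throughout Section~\ref{sec:main}: the flattening estimate of Theorem~\ref{theorem:flat-domain} holds for \emph{every} $s$, so choosing $s$ large enough and using the embedding $H^{s-3/2}(\R^2)\hookrightarrow L^{\infty}(\R^2)$ delivers $\mathcal{O}(\beta^2)$ bounds not only for $\tPhi_{\bullet}-P^{series}_{\bullet}$ but for as many of their $X$- and $\ttt$-derivatives as the balance laws require (the time derivatives via Theorems~\ref{theorem:tphi_euler_app} and~\ref{theorem:t_pphi}); the surface traces arising from Leibniz' rule are the same $L^{\infty}(\Omega_t)$ quantities evaluated at $\tz=1+\alpha\teta$ and are therefore already controlled. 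Collecting the contributions, and recalling the standing assumption $\alpha=\beta$ so that $\alpha\beta^2+\alpha^2\beta=\mathcal{O}(\beta^3)$ and $\alpha\beta+\beta^2=\mathcal{O}(\beta^2)$, completes the estimate with constants $C_1,\dots,C_4$ depending only on $h_{min}^{-1}$, $\beta_{max}$ and the high Sobolev norms of $(\teta,\nabla\psi)$, hence independent of $\beta$.
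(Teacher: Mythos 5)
Your proposal is correct and follows essentially the same route as the paper's proof: start from the exact column-integrated Euler balance laws, replace $\nabla_{X,z}\tilde{\Phi}$, $\tilde{\Phi}_{\tilde{t}}$ and $\tilde{P}'$ by $\nabla_{X,z}P^{series}$, $P^{series}_{\tilde{t}}$ and $\tilde{Q}$ using Corollary \ref{corollary:pphi_deriv}, Theorem \ref{theorem:t_pphi} and the pressure corollary of Section \ref{sec:pressure}, then carry out the vertical integration together with the substitutions \eqref{eq:height333}--\eqref{eq:height444} and bound the resulting algebraic remainder (the quantities $\Xi$ and $r'$ in the paper) via Sobolev embedding, which yields exactly the stated $\alpha$--$\beta$ prefactors. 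If anything, you are more explicit than the paper on a point it passes over silently---the need for $L^{\infty}$ control of the $\tilde{x}$-, $\tilde{y}$- and $\tilde{t}$-derivatives of the differences and of the Leibniz surface traces---which you correctly resolve by taking $s$ large in the flattening estimate of Theorem \ref{theorem:flat-domain}.
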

\begin{proof}
We present a full justification only for the first inequality (approximate mass balance). The others follow by similar substitutions.
In fact,
\begin{equation}
 \frac{\partial }{\partial \tilde{t}}\left \{1+\alpha \tilde{\eta } \right \}
 +\frac{\partial }{\partial \tilde{x}}\int_{\tilde{z}=0}^{1+\alpha \tilde{\eta }}\alpha P^{series}_{\tilde{x}}\,d\tilde{z}
 +\frac{\partial }{\partial \tilde{y}}\int_{\tilde{z}=0}^{1+\alpha \tilde{\eta }}\alpha P^{series}_{\tilde{y}}  \,d\tilde{z}=\mathcal{O}(\alpha\beta^2)\ .
\end{equation}
We replace $P^{series}_{\tilde{x}}$ and $P^{series}_{\tilde{y}}$ by expressions \eqref{eq:height333} and \eqref{eq:height444} as follows
\begin{align*}
&\frac{\partial }{\partial \tilde{t}}\left \{ 1+\alpha \tilde{\eta } \right \}  +\frac{\partial }{\partial \tilde{x}}\left\{\alpha\tilde{u}(1+\alpha\tilde{\eta})-\frac{\alpha(1+\alpha\tilde{\eta})^3}{6}\beta\Delta\tilde{u} +\frac{(1+\alpha\tilde{\eta})^5}{120}\alpha\beta^2(\Delta^2\tilde{u})\right\}\\
& +\frac{\partial }{\partial \tilde{y}}\left\{\alpha\tilde{v}(1+\alpha\tilde{\eta})-\frac{\alpha(1+\alpha\tilde{\eta})^3}{6}\beta\Delta\tilde{v}+\frac{(1+\alpha\tilde{\eta})^5}{120}\alpha\beta^2(\Delta^2\tilde{v}) \right\}=\mathcal{O}(\alpha\beta^2).
\end{align*}
Substituting $\tilde{u}$ and $\tilde{v}$ by their expressions in terms of $\tilde{U}$ and $\tilde{V}$ given up to order $\mathcal{O}(\beta^3)$ by \eqref{eq:height333}
and \eqref{eq:height444}, one can write:
$$\frac{\partial }{\partial \tilde{t}}\left\{1+\alpha \tilde{\eta }\right\}+\frac{\partial }{\partial \tilde{x}}\left\{ \tilde{U}(\alpha +\alpha ^2\tilde{\eta })+\frac{\alpha \beta}{2} \left ( \theta ^2
 -\frac{1}{3} \right )\Delta \tilde{U}\right\}
+\frac{\partial }{\partial \tilde{y}}\left\{  \tilde{V}(\alpha +\alpha ^2\tilde{\eta })+\frac{\alpha \beta}{2} \left ( \theta ^2
 -\frac{1}{3} \right )\Delta \tilde{V}\right\}=\Xi.$$
Where,
\begin{align*}
\Xi&=\frac{\partial}{\partial \tilde{x}}\bigg\{\frac{\alpha^2\beta}{6}(\alpha^2\tilde{\eta}^3+3\alpha\tilde{\eta}^2 +3\tilde{\eta}-3\theta^2 \tilde{\eta})\Delta\tilde{U}-\frac{\alpha\beta^2}{120}(\alpha^5\tilde{\eta}^5+5\alpha^4\tilde{\eta}^4+10\alpha^3\tilde{\eta}^3-10\alpha^3\theta^2\tilde{\eta}^3
+10\alpha^2\tilde{\eta}^2\\
&-30\alpha^2\theta^2\tilde{\eta}^2+5\alpha\tilde{\eta}-30\alpha\theta^2\tilde{\eta}+25\alpha\theta^4\tilde{\eta}-10\theta^2+25\theta^4+1)\Delta^2\tilde{U}\bigg\}\\
&+\frac{\partial}{\partial \tilde{y}}\bigg\{\frac{\alpha^2\beta}{6}(\alpha^2\tilde{\eta}^3+3\alpha\tilde{\eta}^2 +3\tilde{\eta}-3\theta^2 \tilde{\eta})\Delta\tilde{V}-\frac{\alpha\beta^2}{120}(\alpha^5\tilde{\eta}^5+5\alpha^4\tilde{\eta}^4+10\alpha^3\tilde{\eta}^3-10\alpha^3\theta^2\tilde{\eta}^3
+10\alpha^2\tilde{\eta}^2\\
&-30\alpha^2\theta^2\tilde{\eta}^2+5\alpha\tilde{\eta}-30\alpha\theta^2\tilde{\eta}+25\alpha\theta^4\tilde{\eta}-10\theta^2+25\theta^4+1)\Delta^2\tilde{V}\bigg\} +\mathcal{O}(\alpha\beta^2).\\
\end{align*}
Since $\tilde{U}$ and $\tilde{V}$ can be expressed in terms of $\tilde{\eta}$ and $\psi$, hence taking $s$ large enough such that $H^s(\R^2) \hookrightarrow L^{\infty}(\R^2)$, we get:
$$\Xi=\mathcal{O}(\alpha\beta^2,\alpha^2\beta).$$
Replacing $P^{series}_{\tilde{x}}$ and $P^{series}_{\tilde{y}}$ by expressions \eqref{eq:height333} and \eqref{eq:height444} once again in 
\begin{align*}
\alpha \frac{\partial }{\partial \tilde{t}}\int_{\tilde{z}=0}^{1+\alpha \tilde{\eta }}P^{series}_{\tilde{x}}\,d\tilde{z}
+\frac{\partial }{\partial \tilde{x}}\int_{\tilde{z}=0}^{1+\alpha \tilde{\eta }} \left \{ \alpha ^2
(P^{series}_{\tilde{x}})^2 
+\alpha \tilde{Q}-(\tilde{z}-1)\right \}\,d\tilde{z}+
\alpha ^2 \frac{\partial }{\partial \tilde{y}} \int_{\tilde{z}=0}^{1+\alpha \tilde{\eta }}P^{series}_{\tilde{x}}P^{series}_{\tilde{y}}\,d\tilde{z}=\mathcal{O}(\alpha\beta,\beta^2)\ ,
\end{align*}
For the approximate momentum balance we give some details on the pressure term,
\begin{align*}
\int_0^{1+\alpha\tilde{\eta}}\tilde{Q}\,d\tilde{z}=-\frac{\partial}{\partial\tilde{x}}\int_0^{1+\alpha\tilde{\eta}}\left\{P^{series}_{\tilde{t}}+\frac{1}{2}\alpha\left((P^{series}_{\tilde{x}})^2+(P^{series}_{\tilde{y}})^2\right)-\frac{1}{2}\frac{\alpha}{\beta}(P^{series}_{\tilde{z}})^2\right\}\,d\tilde{z}    
\end{align*}
Using the expressions of $P^{series}$ and $f$ given by \eqref{eq:exp_P} and \eqref{eq:sbfc} respectively, and then substituting the expressions of $\tilde{u}$ and $\tilde{v}$ in terms of of $\tilde{U}$ and $\tilde{V}$ given by \eqref{eq:height333} and \eqref{eq:height444}, one can write
\begin{align*}
\int_0^{1+\alpha\tilde{\eta}}\tilde{Q}\,d\tilde{z}=\frac{\partial}{\partial\tilde{x}}\left\{\tilde{\eta}+\alpha\tilde{\eta}^2-\frac{\beta}{3}(\tilde{U}_{\tilde{x}\tilde{t}}+\tilde{V}_{\tilde{x}\tilde{t}})+r'\right\}
\end{align*}
with
\begin{align*}
r'&=\frac{\alpha^2\beta}{2}\left(\tilde{\eta}^2+\frac{\alpha}{3}\tilde{\eta}^3\right)(\tilde{U}_{\tilde{x}\tilde{t}}+\tilde{V}_{\tilde{x}\tilde{t}})-\frac{\beta^2}{120}(1+\alpha\tilde{\eta})^5(\Delta\tilde{U}_{\tilde{x}\tilde{t}}+\Delta\tilde{V}_{\tilde{x}\tilde{t}})-\frac{\alpha\beta}{6}(1+\alpha\tilde{\eta})^3\tilde{U}\Delta\tilde{U}+\frac{\alpha\beta^2}{2}\Big(-\frac{\theta^2}{6}(1+\alpha\tilde{\eta})^3\\
&\left((\Delta\tilde{U})^2+(\Delta\tilde{V})^2+\tilde{U}\Delta^2\tilde{U}+\tilde{V}\Delta^2\tilde{V}\right)+\alpha\theta^2\left(\tilde{\eta}^2+\frac{\alpha}{3}\tilde{\eta}^3\right)(\Delta\tilde{U}_{\tilde{x}\tilde{t}}+\Delta\tilde{V}_{\tilde{x}\tilde{t}})+\frac{(1+\alpha\tilde{\eta})^5}{12}\left(\tilde{U}\Delta^2\tilde{U}+\tilde{V}\Delta^2\tilde{V} \right)\\
&+
\frac{(1+\alpha\tilde{\eta})^3}{12}\left((\Delta\tilde{U})^2+(\Delta\tilde{V})^2\right)\Big)-\frac{\alpha\beta}{6}(1+\alpha\tilde{\eta})^3(\tilde{U}_{\tilde{x}}+\tilde{V}_{\tilde{x}})^2+\frac{\alpha\beta^2}{6}\theta^2\left((\Delta{U}_{\tilde{x}})^2+(\Delta{V}_{\tilde{x}})^2+\tilde{U}_{\tilde{x}}\Delta\tilde{V}_{\tilde{x}}+\tilde{V}_{\tilde{x}}\Delta\tilde{U}_{\tilde{x}}\right)\\
&+\frac{\alpha\beta^2}{30}(1+\alpha\tilde{\eta})^5(\tilde{U}_{\tilde{x}}+\tilde{V}_{\tilde{x}})(\Delta\tilde{U}_{\tilde{x}}+\Delta\tilde{V}_{\tilde{x}})-\frac{\beta^2}{6}\theta^2(\Delta\tilde{U}_{\tilde{x}\tilde{t}}+\Delta\tilde{V}_{\tilde{x}\tilde{t}})+\mathcal{O}(\beta^3).
\end{align*}
The other terms are treated similarly as the terms in the approximate mass conservation equation. 
\end{proof}

\section{Numerical validation}\label{sec:numerics}

\setcounter{table}{0}

In this final section, we present a computational study to gain insight into the approximate conservation of mass (\ref{eq:massD}), momentum (\ref{eq:momentum1D})--(\ref{eq:momentum2D}), and energy (\ref{eq:energyD}) in a practical scenario. We focus on the circular expansion of water waves originating from an initial localized source.

For this purpose, we numerically solve the Boussinesq system (\ref{eq:gsystem1})--(\ref{eq:gsystem3}) with $\theta^2=9/11$, using periodic boundary conditions within the domain $D=[-20,20]\times[-20,20]$. Boussinesq systems corresponding to different values of $\theta$ are asymptotically equivalent to the present system and exhibit similar behavior. Our numerical method employs the standard pseudo-spectral approach in combination with the classical four-stage, fourth-order Runge-Kutta method.

We initiate the simulation with the following initial conditions: $\eta(x,y,0)=\exp(-(x^2+y^2)/5)$, $u(x,y,0)=v(x,y,0)=0$, for various values of $\alpha=\beta$ as detailed in Table \ref{tab:errors}. We numerically integrate up to time $T=10$ using a time step of $\Delta t=10^{-4}$ and a spatial step of $\Delta x=0.1$. Throughout the computation, we record values of the discrete quantities (\ref{eq:massD}), (\ref{eq:momentum1D})--(\ref{eq:momentum2D}), and (\ref{eq:energyD}).

To discretize the temporal derivatives in these quantities, we employ forward finite differences, while the spatial derivatives are computed using the fast Fourier transform.
\begin{figure}[h!]
  \centering
  \includegraphics[width=0.8\textwidth]{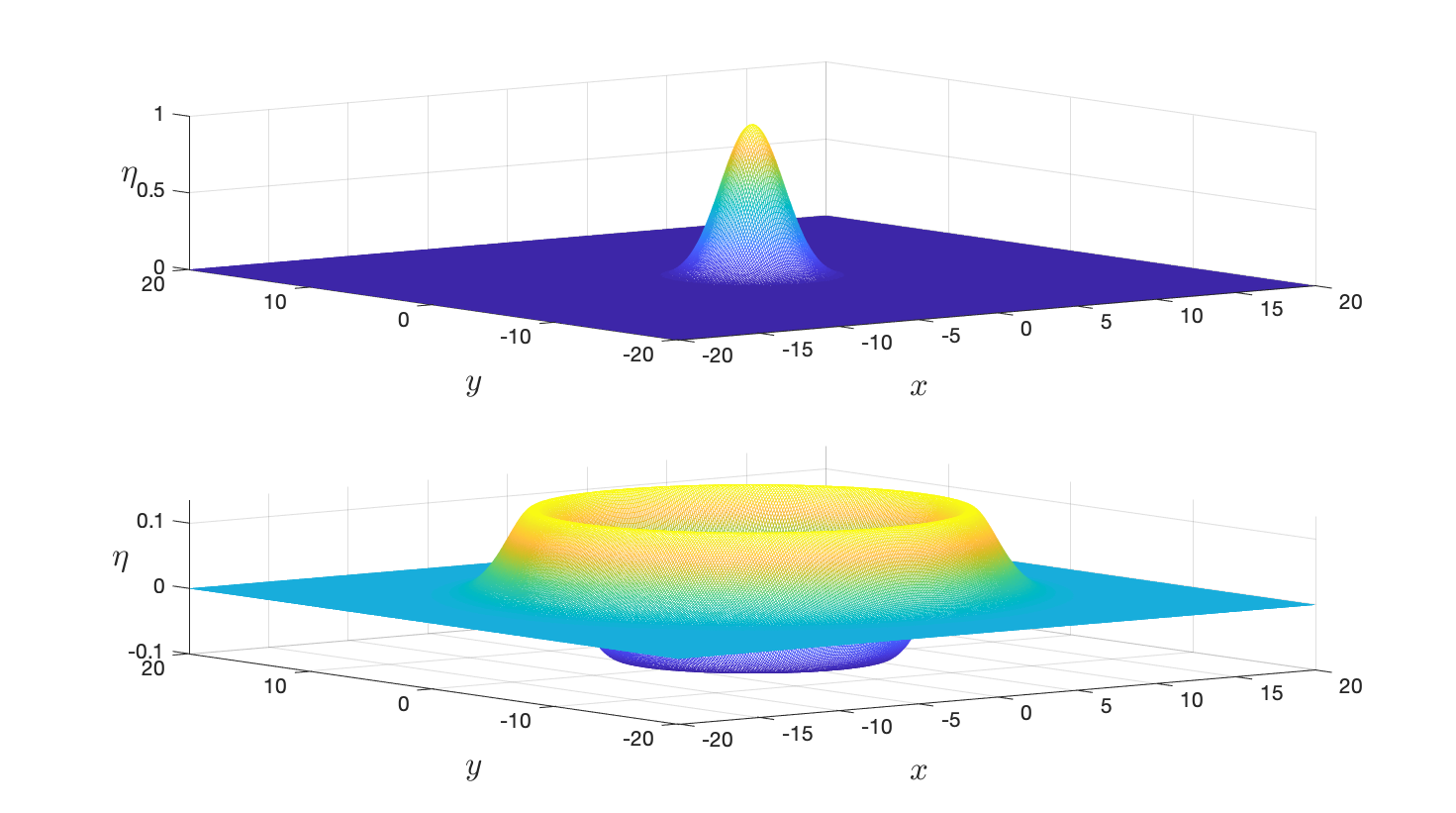}
  \caption{\small Generation of expanding water waves from a heap of water. The waves are propagating in a circular pattern with growing radius.}
  \label{fig:expand}
\end{figure}

\begin{figure}[h!]
  \centering
  \includegraphics[width=0.8\textwidth]{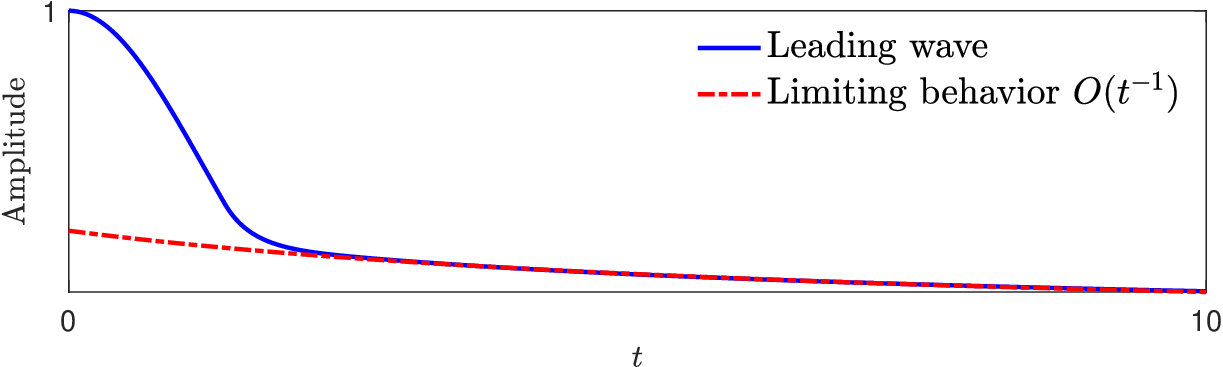}
  \caption{\small Evolution of the amplitude of the leading expanding wave.
The leading wave in the numerical approximation is shown in blue.}
  \label{fig:asympt}
\end{figure}
\begin{table}[ht!]
\begin{center}
\begin{tabular}{cccc}
\hline
$\alpha=\beta$ & Mass & Momentum & Energy \\  
\hline
$0.05$ & $2.21\times 10^{-5}$ & $4.16\times 10^{-3}$ & $3.76\times 10^{-4}$\\
$0.10$ & $1.57\times 10^{-4}$ & $8.22\times 10^{-3}$ & $1.35\times 10^{-3}$\\
$0.15$ & $4.99\times 10^{-4}$ & $1.21\times 10^{-2}$ & $2.84\times 10^{-3}$\\
$0.20$ & $1.12\times 10^{-3}$ & $1.59\times 10^{-2}$ & $4.76\times 10^{-3}$\\
$0.25$ & $2.08\times 10^{-3}$ & $1.96\times 10^{-2}$ & $7.05\times 10^{-3}$\\
$0.30$ & $3.43\times 10^{-3}$ & $2.33\times 10^{-2}$ & $9.67\times 10^{-3}$\\
\hline
\end{tabular}
\end{center}
\caption{\small Residuals of mass, momentum and energy conservation laws evaluated 
using the numerical approximation of a solution of the Boussinesq system.}
\label{tab:errors}
\end{table}
\begin{figure}[h!]
  \centering
  \includegraphics[width=0.8\textwidth]{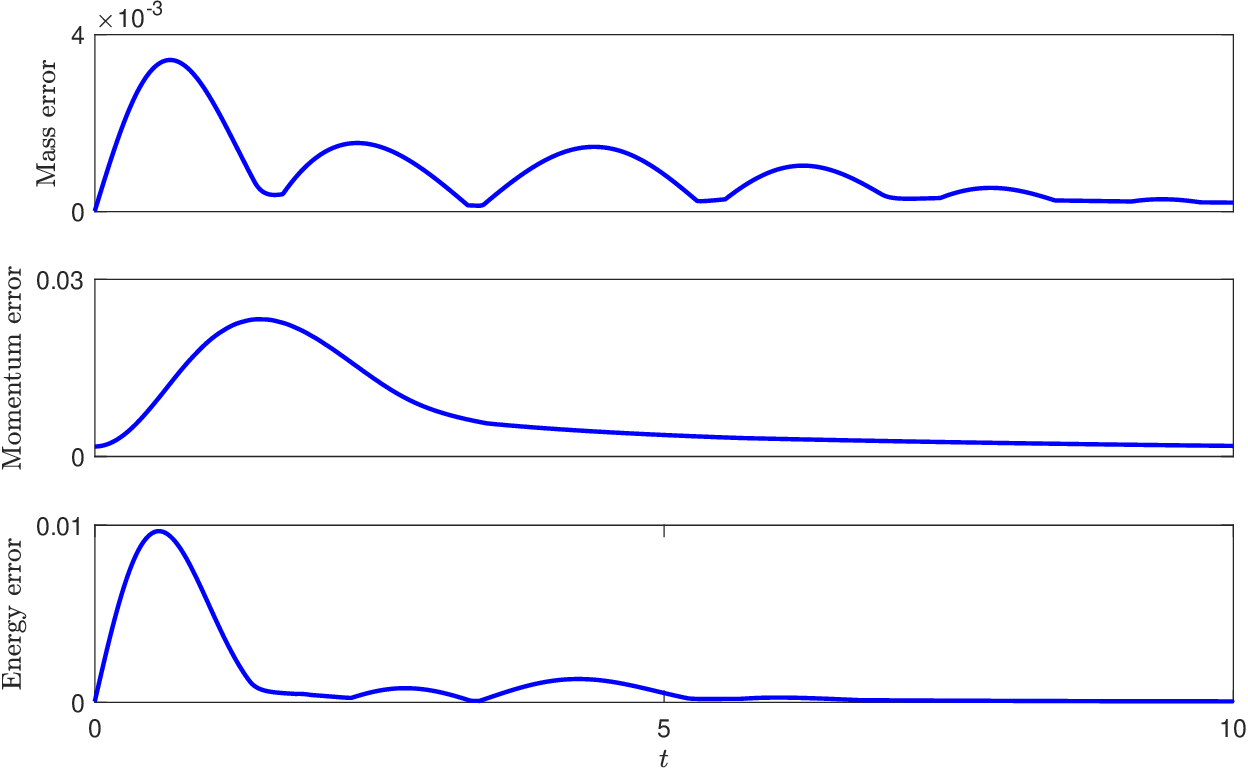}
  \caption{\small Time development of the residuals (absolute errors) of mass, momentum and energy balance laws using the numerical approximation of a solution of the Boussinesq system.}
  \label{fig:errors}
\end{figure} 

The specific initial condition resembles a mound of water, often observed when an object falls into the water or when a tsunami is triggered by changes in the bottom topography (for more details, see \cite{mitsotakis2009boussinesq}).

Figure \ref{fig:expand} illustrates the initial condition and its evolution at $T=10$. The initial condition transforms into a series of expanding waves traveling at varying speeds due to dispersion. Over time, the amplitudes of these waves decrease as a result of dispersion's influence. This reduction in amplitude follows a rate proportional to $t^{-1}$ due to the radial spreading of the two-dimensional waves \cite{Whitham1974}. Figure \ref{fig:asympt} presents the amplitude of the leading expanding wave, and for a detailed derivation of the decrease rate, we direct interested readers to \cite{Whitham1974}.

Subsequently, for the specific initial data presented in Figure \ref{fig:expand}, the computed residuals of the mass, momentum, and energy balance laws initially increase but decay over time. The maximum value attained by each conservation law is listed in Table \ref{tab:errors}. It's worth noting that these maximum values were not reached again, and the residuals appeared to decrease on average. Figure \ref{fig:errors} displays the absolute error patterns for the case where $\alpha=\beta=0.3$; patterns for other cases are quite similar and thus omitted.

A plausible explanation for the decaying of the residuals is that the expanding waves tend to approach linear waves, as explained in the previous section. These numerical results validate the orders of accuracy of the balance laws, particularly in the common scenario of dispersive waves generated from a general initial condition.

\section*{Acknowledgments}
\noindent This research was supported by the Research Council of Norway
under grant numbers 213474/F20 and 239033/F20.
SI would like to thank the Department of Mathematics at the University of Bergen, Norway,
and especially Henrik Kalisch for their kind hospitality when work on this paper was begun.

\bibliographystyle{plain}

\end{document}